\documentclass{amsart}
\pdfoutput=1 
\usepackage[text={125mm,195mm}, centering]{geometry}
\usepackage{amssymb, latexsym, amsmath, verbatim, amsthm, amscd,xfrac}
\usepackage{pinlabel}
\usepackage{graphicx}
\usepackage{hyperref}

%---------------------------------------
% Start of metadata 
%---------------------------------------

\title{Growth Tight Actions of Product Groups}
% First author 
% 
\author{Christopher H. Cashen}
\email{\href{mailto:christopher.cashen@univie.ac.at}{christopher.cashen@univie.ac.at}}
\address{Fakult\"at f\"ur Mathematik\\
Universit\"at Wien}
\author{Jing Tao} 
\email{\href{mailto:jing@math.ou.edu}{jing@math.ou.edu}}
\address{Department of Mathematics\\University of Oklahoma}
\keywords{Growth tight, exponential growth rate, product groups}
\date{February 4, 2015}

\subjclass[2010]{20F67, 20F65, 37C35}
\thanks{The first author is supported by the European Research Council
(ERC) grant of Goulnara ARZHANTSEVA, grant agreement \#259527 and the Erwin
Schr\"odinger Institute workshop ``Geometry and Computation in Groups''.
The second author is partially supported by NSF grant DMS-1311834.}

\hypersetup{
%    bookmarks=true,         % show bookmarks bar?
%    unicode=false,          % non-Latin characters in Acrobat's bookmarks
%    pdftoolbar=true,        % show Acrobat's toolbar?
%    pdfmenubar=true,        % show Acrobat's menu?
%    pdffitwindow=true,      % page fit to window when opened
    pdftitle={},    % title
    pdfauthor={Christopher H. Cashen and Jing Tao},     % author
%    pdfsubject={Subject},   % subject of the document
%    pdfnewwindow=true,      % links in new window
    pdfkeywords={Growth tight, exponential growth rate, product groups}, % list of keywords
    colorlinks=true,       % false: boxed links; true: colored links
    linkcolor=black,          % color of internal links
    citecolor=black,        % color of links to bibliography
    filecolor=black,      % color of file links
    urlcolor=black           % color of external links
}

%---------------------------------------
% End of metadata 
%---------------------------------------

%---------------------------------------
% Start of User-defined macros
%---------------------------------------
%!TEX root = Main.tex
\theoremstyle{plain}
\newtheorem{theorem}{Theorem}[section]
\newtheorem{lemma}{Lemma}[section]
\newtheorem{proposition}{Proposition}[section]
\newtheorem{corollary}{Corollary}[section]

\newtheorem*{question*}{Question}

\theoremstyle{remark}

\newtheorem*{remark}{Remark}

\theoremstyle{definition}
\newtheorem{definition}{Definition}[section]

%---------- autoref
\def\makeautorefname#1#2{\expandafter\def\csname#1autorefname\endcsname{#2}}
\let\fullref\autoref

\makeautorefname{theorem}{Theorem} 
\makeautorefname{lemma}{Lemma} 
\makeautorefname{proposition}{Proposition} 
\makeautorefname{corollary}{Corollary} 
\makeautorefname{definition}{Definition}
\makeautorefname{example}{Example}
\makeautorefname{section}{Section}
\makeautorefname{subsection}{Section}
\makeautorefname{subsubsection}{Section}
\makeautorefname{claim}{Claim}

\makeatletter 
\let\c@lemma=\c@theorem 
\makeatother
\makeatletter 
\let\c@proposition=\c@theorem 
\makeatother
\makeatletter 
\let\c@corollary=\c@theorem 
\makeatother
\makeatletter 
\let\c@definition=\c@theorem 
\makeatother
\makeatletter 
\let\c@example=\c@theorem 
\makeatother
\makeatletter 
\let\c@claim=\c@theorem 
\makeatother

%-----------------------

%------------ for making better overlines
\makeatletter
\newsavebox\myboxA
\newsavebox\myboxB
\newlength\mylenA

\newcommand*\xoverline[2][0.75]{%
    \sbox{\myboxA}{$\m@th#2$}%
    \setbox\myboxB\null% Phantom box
    \ht\myboxB=\ht\myboxA%
    \dp\myboxB=\dp\myboxA%
    \wd\myboxB=#1\wd\myboxA% Scale phantom
    \sbox\myboxB{$\m@th\overline{\copy\myboxB}$}%  Overlined phantom
    \setlength\mylenA{\the\wd\myboxA}%   calc width diff
    \addtolength\mylenA{-\the\wd\myboxB}%
    \ifdim\wd\myboxB<\wd\myboxA%
       \rlap{\hskip 0.5\mylenA\usebox\myboxB}{\usebox\myboxA}%
    \else
        \hskip -0.5\mylenA\rlap{\usebox\myboxA}{\hskip 0.5\mylenA\usebox\myboxB}%
    \fi}
\makeatother
\newcommand{\inv}[1]{\xoverline{#1}}
%-------------------------------

\def\act{\curvearrowright} % group action
\def\from{\colon\thinspace}

\newcommand{\normal}{\trianglelefteq}
 % boundary
 % interior
 % closure
\renewcommand{\setminus}{-}

\def\bp{o}
\def\X{\mathcal{X}}
\def\P{\mathcal{P}}
\def\A{\mathcal{A}}
\def\H{\mathcal{H}}
\DeclareMathOperator{\diam}{diam}
\newcommand{\basis}{S}
\newcommand{\F}{\mathbb{F}}
\newcommand{\Y}{\mathbb{Y}}

%---------------------------------------
% End of user-defined macros
%---------------------------------------

%---------------------------------------
% Start of frontmatter
%---------------------------------------
\begin{document}
\begin{abstract}
%!TEX root = Main.tex

A group action on a metric space is called growth tight if the
exponential growth rate of the group with respect to the induced
pseudo-metric is strictly greater than that of its quotients. 
A prototypical example is the action of a free group on its Cayley
graph with respect to a free generating set.
More generally, with Arzhantseva we have shown that group actions with strongly
contracting elements are growth tight. 

Examples of non-growth tight actions are product
groups acting on the $L^1$ products of Cayley graphs of the factors.

In this paper we consider actions of product groups on product spaces,
where each factor group acts with a strongly contracting element on
its respective factor space. 
We show that this action is growth tight with respect to the $L^p$
metric on the product space, for all $1<p\leq \infty$.
In particular, the $L^\infty$ metric on a product of Cayley graphs corresponds to a
word metric on the product group.
This gives the first examples of groups that are growth tight with
respect to an action on one of their Cayley graphs and non-growth
tight with respect to an action on another, answering a question of
Grigorchuk and de la Harpe.

\end{abstract}

\maketitle

%---------------------------------------
% End of frontmatter
%---------------------------------------

%---------------------------------------
% Start of main body of article
%---------------------------------------
\section{Introduction}
%!TEX root = Main.tex
The \emph{growth exponent} of a set $A$ with respect to a pseudo-metric $d$
is
\[\delta_{A,d} =\limsup_{r\to\infty}\frac{1}{r}\cdot \log \# \{a\in A\mid
  d(\bp,a)\leq r\}\]
where $\#$ denotes cardinality and $\bp\in A$ is some basepoint. The limit
is independent of the choice of basepoint.

Let $G$ be a finitely generated group, and let $(\X,d, \bp)$ be a proper,
based, geodesic metric space on which $G$ acts properly discontinuously and
cocompactly by isometries. 

The metric $d$ induces a left invariant pseudo-metric $\bar{d}$ on any
quotient $G/N$ of $G$ by $\bar{d}(gN,g'N)=\min_{n,n'\in
N}d(gn.\bp,g'n'\!.\bp)$. When $(\X,d,\bp)$ is clear we let $\delta_{G/N}$
denote $\delta_{G/N, \bar{d}}$ and let $\delta_G$ denote $\delta_{G/\{1\},\bar{d}}$.

\begin{definition}[{\cite{ArzCasTao13}}]
  $G\curvearrowright \X$ is a \emph{growth tight action} if
  $\delta_G>\delta_{G/N}$ for every infinite normal subgroup $N\normal G$.

  If $\basis$ is a finite generating set of $G$, we say $G$ is \emph{growth
  tight with respect to $\basis$} if the action of $G$ via left
multiplication on the Cayley graph of $G$
  with respect to $\basis$ is growth tight.
\end{definition}

The first examples of such actions were given by Grigorchuk and de la Harpe
\cite{GriDeL97}, who showed that a finite rank, non-abelian free group $\F$
is growth tight with respect to any free generating set $\basis$. In the
same paper, they observe that the product $\F\times\F$ is not growth tight
with respect to the generating set
$\basis\times\{1\}\cup\{1\}\times\basis$, and ask whether there exists a
finite generating set with respect to which $\F\times\F$ is growth tight.

We answer this question affirmatively. This is the first example of a group
that is growth tight with respect to one generating set and not growth
tight with respect to another.

Our main result is for growth tightness of product groups $G_1 \times
\cdots \times G_n$. We require that each factor $G_i$ acts cocompactly with
a strongly contracting element on a space $\X_i$, see
\fullref{def:strongcontracting}. Examples include actions of hyperbolic or
relatively hyperbolic groups by left multiplication on any of their Cayley
graphs, and groups acting cocompactly on proper CAT(0) spaces with rank 1
isometries. With Arzhantseva \cite{ArzCasTao13}, we have shown that such
actions are growth tight. 

\begin{theorem}\label{main} 
  For $1\leq i\leq n$, let $G_i$ be a non-elementary, finitely generated
  group acting properly discontinuously and cocompactly by isometries on a
  proper, based, geodesic metric space $(\X_i,d_i,\bp_i)$ with a strongly
  contracting element. Let $G=G_1\times\cdots\times G_n$. Let
  $\X=\X_1\times\cdots\times \X_n$, with $\bp=(\bp_1,\dots, \bp_n)$ and let
  $d$ be the $L^p$ metric on $\X$ for some $1 \le p \le \infty$. 
  Let $G\act \X$ be the coordinate-wise action.
  Then $G\act \X$ is growth tight unless $p=1$ and $n>1$.
\end{theorem}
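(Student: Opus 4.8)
The plan is to reduce the theorem to the single-factor case $n=1$ (which is exactly the statement of \cite{ArzCasTao13}) by combining two ingredients: an exact computation of the growth exponent of an $L^p$-product, and a structural fact about infinite normal subgroups of a direct product. Throughout, write $\delta_i$ for the growth exponent of $G_i\act\X_i$, let $q$ be the H\"older conjugate of $p$ (with $q=1$ when $p=\infty$ and $q=\infty$ when $p=1$), and for a tuple $(x_i)$ write $\|(x_i)\|_q$ for its $\ell^q$-norm, so $\|(x_i)\|_1=\sum_i x_i$ and $\|(x_i)\|_\infty=\max_i x_i$.

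\emph{Ingredient A (growth of an $L^p$-product).} I claim $\delta_G=\|(\delta_i)\|_q$, and more generally that any $L^p$-product of based actions has growth exponent the $\|\cdot\|_q$-norm of the growth exponents of the factors. The upper bound is soft: partition the $r$-ball of $G$ according to the integer parts $(k_1,\dots,k_n)$ of the coordinate displacements $d_i(\bp_i,g_i.\bp_i)$; there are only polynomially many admissible tuples (those with $\|(k_i)\|_p\le r+O(1)$), the number of elements with a fixed profile is at most $\prod_i\#\{g_i\in G_i\mid d_i(\bp_i,g_i.\bp_i)\le k_i\}\le C\prod_i e^{(\delta_i+\epsilon)k_i}$ for any $\epsilon>0$ (this uses only the definition of $\delta_i$ and properness), and $\sum_i(\delta_i+\epsilon)k_i\le\|(\delta_i+\epsilon)\|_q\,\|(k_i)\|_p$ by H\"older's inequality. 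For the matching lower bound one picks weights $\theta_i\ge 0$ with $\|(\theta_i)\|_p=1$ realizing equality in H\"older (so $\theta_i\propto\delta_i^{\,q-1}$; take all $\theta_i=1$ when $p=\infty$), puts a target displacement $\approx\theta_i r$ in the $i$-th factor, and multiplies the counts over the factors. This step requires a lower bound $\#\{g_i\in G_i\mid d_i(\bp_i,g_i.\bp_i)\le t\}\ge c\,e^{\delta_i t}$ valid for \emph{all} large $t$, not merely along a subsequence — the ``no long gaps'' regularity of the spherical growth — and this is where the strongly contracting hypothesis is used beyond what \cite{ArzCasTao13} already gives us. I record as part of Ingredient~A that the same formula, with the easy upper-bound half needing nothing beyond properness, applies to $L^p$-products of induced pseudo-metrics on quotients $G_i/M_i$.

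\emph{Ingredient B (infinite normal subgroups of $G$).} Let $N\normal G$ be infinite, identify $G_i$ with the $i$-th coordinate subgroup, and set $M_i=N\cap G_i\normal G_i$. A direct computation gives $[g,\nu]\in G_i\cap N=M_i$ for $g\in G_i$ and $\nu\in N$, hence $[G_i,\pi_i(N)]\le M_i$, where $\pi_i\colon G\to G_i$ is the projection. Now, a non-elementary group admitting a strongly contracting element has finite FC-centre (a standard consequence of the hypothesis: the centralizer of such an element is virtually cyclic, and one bootstraps from this). If $M_i$ were finite then $[G_i,\pi_i(N)]$ would be finite, forcing every element of $\pi_i(N)$ to have finite-index centralizer in $G_i$, so $\pi_i(N)$ would lie in the FC-centre and hence be finite; since $N\le\prod_i\pi_i(N)$ is infinite, the set $I=\{i\mid M_i\text{ infinite}\}$ is therefore nonempty. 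The subgroups $\{M_i\}_{i\in I}$ pairwise commute and have pairwise trivial intersection, so $K=\prod_{i\in I}M_i$ is an infinite subgroup of $N$, each $M_i$ ($i\in I$) being an infinite normal subgroup of $G_i$.

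\emph{Assembly.} Because $K\le N$, the quotient map $G/K\to G/N$ does not increase the induced pseudo-metrics and every coset in the $r$-ball of $G/N$ lifts to the $r$-ball of $G/K$, whence $\delta_{G/N}\le\delta_{G/K}$. Since $K=\prod_{i\in I}M_i$ is a product of normal subgroups of the factors, $G/K\cong\prod_{i\in I}(G_i/M_i)\times\prod_{i\notin I}G_i$, and because $d$ is an $L^p$ metric and $\|\cdot\|_p$ is coordinatewise monotone, coordinatewise minimization is optimal, so the induced pseudo-metric on $G/K$ is exactly the $L^p$-product of the induced pseudo-metrics on the $G_i/M_i$ ($i\in I$) and the metrics $d_j$ on $\X_j$ ($j\notin I$). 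By the upper-bound half of Ingredient~A,
\[
\delta_{G/N}\ \le\ \delta_{G/K}\ \le\ \bigl\|\,(\delta_{G_i/M_i})_{i\in I}\ \cup\ (\delta_i)_{i\notin I}\,\bigr\|_q .
\]
By \cite{ArzCasTao13} the action $G_i\act\X_i$ is growth tight, so $\delta_{G_i/M_i}<\delta_i$ for every $i\in I$; moreover $I\ne\emptyset$ and every $\delta_i>0$ (a non-elementary group with a strongly contracting element contains a free subgroup, hence has exponential growth in $\X_i$). When $q<\infty$, i.e. $p>1$, the norm $\|\cdot\|_q$ is strictly increasing in each positive coordinate, so the right-hand side is strictly less than $\|(\delta_i)_{i=1}^n\|_q=\delta_G$ and the action is growth tight; the case $p=1$, $n=1$ is literally \cite{ArzCasTao13}. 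Only $p=1$ with $n>1$ escapes, precisely because $\|\cdot\|_\infty=\max$ is not strictly monotone in a non-maximal coordinate: for $N=G_1\times\{1\}\times\cdots\times\{1\}$ one gets $\delta_{G/N}=\max_{i\ge 2}\delta_i$, which equals $\delta_G=\max_i\delta_i$ whenever $\delta_1\le\max_{i\ge 2}\delta_i$. I expect the main obstacle to be the lower bound in Ingredient~A — establishing that each factor action has no long gaps in its spherical growth — so that independent choices in the factors genuinely realize the H\"older-optimal exponent $\|(\delta_i)\|_q$; Ingredient~B is comparatively routine once finiteness of the FC-centre is granted, and the Assembly step is bookkeeping with pseudo-metrics.
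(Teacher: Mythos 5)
Your proof is correct, and it takes a genuinely different and substantially simpler route than the paper. The decisive move is your Ingredient B: if $N\normal G_1\times\cdots\times G_n$ is infinite and each $G_i$ has finite FC-centre, then some $M_i=N\cap G_i$ must be infinite. (The commutator computation $[G_i,\pi_i(N)]\le M_i$ is correct — this is the classical ``normal subgroups of products meet a factor'' argument, and finiteness of $FC(G_i)$ does follow from \cite[Proposition~3.1]{ArzCasTao13}: an infinite FC-centre, being an infinite normal subgroup, would contain a strongly contracting element whose centralizer has finite index yet lies in the virtually cyclic $E(h')$, contradicting non-elementarity.) Once you have the split infinite normal subgroup $K=\prod_{i\in I}M_i\le N$, the assembly step $\delta_{G/N}\le\delta_{G/K}$ is just monotonicity of ball counts under further quotients, and $\delta_{G/K}$ is computed by the duality formula (the paper's Proposition~5.1) since $G/K$ is again a genuine $L^p$-product. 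Strict monotonicity of $\|\cdot\|_q$ for $q<\infty$ then closes the argument, and your analysis of the $p=1$ exception matches the paper's.

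What this buys you: you bypass the paper's Sections~3 and~4 entirely. The paper never observes that $N$ must meet a factor; instead, it finds an element $\underline{h}\in N$ with strongly contracting coordinates (Lemma~3.3), develops an ``oriented growth sensitivity'' result about elements avoiding long positive $h$-projections (Lemma~4.7, built on a free-product-subset embedding and the divergence machinery), and then shows that minimal coset representatives of $N$ must have some coordinate that avoids such projections (Proposition~5.2). Your reduction makes all of that unnecessary for the theorem as stated: the only heavy inputs are the single-factor growth tightness of \cite{ArzCasTao13}, the finiteness of the FC-centre (a consequence thereof), and Proposition~5.1, whose subadditivity hypothesis you only need for the factor groups $G_i$ themselves (easy, from cocompactness) — not for the trickier sets $\hat G_i(K)$ that the paper must control. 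The paper's more hands-on route does build reusable machinery (the growth-sensitivity statement is interesting in its own right and is what the authors emphasize), but as a proof of Theorem~1.2 your argument is cleaner and more conceptual. The one place you correctly flag as requiring care is the lower-bound half of Ingredient~A; that is exactly what the paper's Lemmas~4.4 and~4.5 supply, and for the factor groups themselves this is routine.
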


\begin{remark}
  Cocompactness of the factor actions is not strictly necessary. We use it
  to prove a subadditivity result, \fullref{lem:subadditivity}. There are
  weaker conditions than cocompactness of the action that can be used to
  prove such a result. These are discussed in \cite[Section 6]{ArzCasTao13}. For
  simplicity, we will stick to cocompact actions in this paper, since this
  suffices for our main applications.
\end{remark}

In the case that $\X_i$ is the Cayley graph of $G_i$ with respect to a
finite, symmetric generating set $\basis_i$, there is a natural bijection
between vertices of $\X$ and elements of $G$. This bijection is an isometry
between vertices of $\X$ with the $L^1$ metric and elements of $G$ with the
word metric corresponding to the generating set: \[\basis^1=\bigcup_{1\leq
i\leq n} \{(s_1,\dots,s_n)\mid s_j=1 \text{ for }j\neq i\text{ and } s_i\in
S_i\}\] The same bijection is also an isometry between vertices of $\X$
with the $L^\infty$ metric and elements of $G$ with the word metric
corresponding to the generating set: \[ \basis^\infty = \big\{
(s_1,\ldots,s_n) \mid s_i \in \basis_i
\cup \{1\} \big \}\]

\begin{corollary}
For $1\leq i\leq n$, let $G_i$ be a non-elementary group with a finite,
symmetric generating set $S_i$. Let $\X_i$ be the Cayley graph of $G_i$
with respect to $S_i$, and suppose that the action of $G_i$ on $\X_i$ by
left multiplication has a strongly contracting element. When $n\geq 2$, the product
$G=G_1\times\cdots \times G_n$ admits a finite generating set $S^1$ for
which the action on the corresponding Cayley graph is not growth tight and
another finite generating set $S^\infty$ for which the action on the
corresponding Cayley graph is growth tight. 
\end{corollary}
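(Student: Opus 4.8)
The plan is to read the Corollary directly off \fullref{main} using the two isometric identifications recorded just above its statement. First I would check that each factor action satisfies the hypotheses of \fullref{main}: each $G_i$ is non-elementary and, being generated by the finite set $\basis_i$, finitely generated; it acts on its Cayley graph $\X_i$ by left multiplication, which is an isometric, properly discontinuous, cocompact action; since $\basis_i$ is finite the graph $\X_i$ is proper and geodesic, and we base it at the vertex $\bp_i$ corresponding to $1$; and by assumption this action has a strongly contracting element. Hence \fullref{main} applies to the coordinate-wise action $G=G_1\times\cdots\times G_n\act\X=\X_1\times\cdots\times\X_n$ for every $1\le p\le\infty$.

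Next I would observe that the growth exponents $\delta_G$ and $\delta_{G/N}$ depend only on the orbit $G.\bp$, the $G$-action on it, and the restriction of the relevant pseudo-metric to it: indeed $\delta_{G/N}$ is computed from $\bar d(gN,g'N)=\min_{n,n'\in N}d(gn.\bp,g'n'\!.\bp)$, which involves only distances between points of $G.\bp$, and $\delta_G=\delta_{G/\{1\}}$ is the special case $N=\{1\}$. Under the bijection between vertices of $\X$ and elements of $G$, the orbit $G.\bp$ with the $L^\infty$ metric is isometric, equivariantly for left multiplication, to $G$ with the word metric of $\basis^\infty$, and likewise for the $L^1$ metric and $\basis^1$. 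So for every infinite $N\normal G$ the values $\delta_G$ and $\delta_{G/N}$ are the same whether computed from the action on $\X$ (with the $L^\infty$, resp.\ $L^1$, metric) or from the action on the corresponding Cayley graph. Thus $G$ is growth tight with respect to $\basis^\infty$ precisely when $G\act\X$ is growth tight for $p=\infty$, and growth tight with respect to $\basis^1$ precisely when $G\act\X$ is growth tight for $p=1$.

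Now I would conclude. Since $p=\infty>1$, \fullref{main} gives that $G\act\X$ with the $L^\infty$ metric is growth tight, hence $G$ is growth tight with respect to $\basis^\infty$. On the other hand $n\ge 2$, so the case $p=1$ is the one excluded in \fullref{main}, and $G\act\X$ with the $L^1$ metric is not growth tight, hence $G$ is not growth tight with respect to $\basis^1$. If a self-contained reason for this last failure is wanted, reorder the factors so that $\delta_{G_n}=\max_i\delta_{G_i}$ and set $N=G_1\times\cdots\times G_{n-1}\times\{1\}$, which is an infinite normal subgroup because each $G_i$ is infinite; a routine count with the $L^1$ metric gives $\delta_G=\max_i\delta_{G_i}$, while the induced pseudo-metric on $G/N\cong G_n$ is exactly the word metric of $G_n$, so $\delta_{G/N}=\delta_{G_n}=\delta_G$, contradicting growth tightness.

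All the mathematical content sits in \fullref{main}. The only point that needs care is the (easy) observation that the growth exponents see only the orbit $G.\bp$ and are therefore unchanged under the isometric identifications of the vertices of $\X$ with $G$; I do not expect a genuine obstacle here.
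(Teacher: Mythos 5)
Your proposal is correct and matches the paper's (implicit) argument: the paper deduces the Corollary directly from \fullref{main} together with the observation, made just before the Corollary's statement, that the vertex set of $\X$ with the $L^1$ (resp.\ $L^\infty$) metric is $G$-equivariantly isometric to $G$ with the word metric of $\basis^1$ (resp.\ $\basis^\infty$). Your remark that the growth exponents depend only on the orbit $G.\bp$ with its restricted pseudo-metric is exactly the point that makes this identification legitimate, and your self-contained explanation of the $p=1$ failure agrees with the argument given in the proof of \fullref{main}.
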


Non-elementary, finitely generated, relatively hyperbolic groups, and
finite rank free groups in particular, act with a strongly
contracting element on any one of their Cayley graphs, so:

\begin{corollary}\label{corollary:fxf}
  If $\F$ is a finite rank free group and $\basis$ is a finite, symmetric
  free generating  set of $\F$ then $\F\times\F$ is growth tight with respect to
  the generating set $(\basis\cup\{1\})\times (\basis\cup\{1\})$.
\end{corollary}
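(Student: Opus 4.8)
The plan is to deduce this as a direct specialization of \fullref{main} (equivalently, of the corollary preceding it). I would take $n=2$ and $G_1=G_2=\F$, with $\X_1=\X_2$ the Cayley graph of $\F$ with respect to $\basis$, on which $\F$ acts by left multiplication. The first step is to verify the hypotheses of \fullref{main}. Since $\basis$ is finite, the Cayley graph is locally finite, hence a proper geodesic metric space, and it is naturally based at the vertex $1$; the left multiplication action is free, properly discontinuous, and cocompact by isometries; and, as in the sentence preceding the corollary, we take $\F$ to be non-elementary, i.e.\ of rank at least two (the statement is false for $\F\cong\mathbb Z$, since $\mathbb Z^2$ and all its quotients have growth exponent $0$).

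The only point that needs a genuine argument is that this action admits a strongly contracting element in the sense of \fullref{def:strongcontracting}. For this I would take any free generator $s\in\basis$. Its axis in the Cayley graph is a bi-infinite geodesic line, namely a copy of the Cayley graph of $\langle s\rangle$. In a tree every metric ball is a subtree, hence connected, so a ball disjoint from this line lies entirely in one complementary component and therefore has closest-point projection onto the line of diameter zero. Thus $s$ is strongly contracting, which is exactly the assertion recorded just before the corollary that a finite rank non-abelian free group acts with a strongly contracting element on any of its Cayley graphs. Now \fullref{main} applies with $p=\infty$ and $n=2$, giving that the coordinate-wise action of $\F\times\F$ on $\X_1\times\X_2$, equipped with the $L^\infty$ metric, is growth tight.

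It remains to translate this into a statement about a Cayley graph of $\F\times\F$, and this is the only place requiring care, though it is purely bookkeeping. By the discussion preceding the first corollary, the natural bijection between the vertices of $\X_1\times\X_2$ and the elements of $\F\times\F$ is an isometry from the $L^\infty$ metric on the former to the word metric on the latter associated to the generating set $\basis^\infty=\{(s_1,s_2)\mid s_i\in\basis\cup\{1\}\}$; since here $\basis_1=\basis_2=\basis$, this set is precisely $(\basis\cup\{1\})\times(\basis\cup\{1\})$. Moreover, under this bijection the coordinate-wise action corresponds to left multiplication. Adjoining the identity element $(1,1)$ to a generating set changes neither the word metric (it is a redundant generator) nor the notion of growth tightness, so it is harmless. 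Therefore growth tightness of the coordinate-wise action on $\X_1\times\X_2$ with the $L^\infty$ metric is exactly growth tightness of $\F\times\F$ with respect to $(\basis\cup\{1\})\times(\basis\cup\{1\})$, which is the claim. Since all of the real content is carried by \fullref{main}, there is no substantial obstacle beyond this identification.
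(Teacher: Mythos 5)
Your proof is correct and is exactly the derivation the paper intends for this corollary: the sentence ``\dots act with a strongly contracting element on any one of their Cayley graphs, so:'' signals precisely the specialization of \fullref{main} (through the preceding corollary) that you carry out. Your caveats are also right: the free group must have rank at least $2$ (as the ``in particular'' preceding the corollary indicates, finite rank free groups are being regarded as a subclass of non-elementary relatively hyperbolic groups), the identity in the generating set is a harmless redundancy, and the strong contraction of a free generator in a tree is immediate because nearest-point projections in a tree of disjoint convex sets are single points. Where the paper differs is that, in addition to this one-line derivation, it supplies an explicit ``sketch of a direct proof'' of \fullref{corollary:fxf} that bypasses \fullref{main} entirely and argues combinatorially in the free group: when the normal subgroup $N$ projects nontrivially to both factors, pick $(h_1,h_2)\in N$ with both coordinates nontrivial; then any element of a minimal section of $G\to G/N$ cannot have $h_i$ as a subword in both coordinates simultaneously, since otherwise multiplying by a conjugate of $(h_1,h_2)^{-1}$ shortens the $L^\infty$ length; and the set of reduced words avoiding a fixed subword has strictly smaller growth exponent than $\F$. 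That direct argument is more elementary and is included because it exhibits, in miniature, the outline that the proof of \fullref{main} generalizes; your approach is the cleaner formal route, and nothing is lost by taking it.
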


Another common way to think of $\F\times\F$ is as the Right Angled
Artin Group with defining graph the join of two sets of vertices of
cardinality equal to the rank of $\F$. 
The universal cover of the corresponding Salvetti complex is the
product of Cayley graphs of $\F$ with respect to free generating
sets. 
There are two natural metrics to consider on the vertex set of the
universal cover of the Salvetti complex: the induced length metric
from the piecewise Euclidean structure, which is the restriction of
the $L^2$ metric on the product, and the induced length metric in the
1--skeleton, which is the restriction of the $L^1$ metric on the
product. 

\begin{corollary}
  The action of $\F\times\F$ on the universal cover of its Salvetti
  complex is growth tight with respect to the piecewise Euclidean
  metric but not growth tight with respect to the 1--skeleton metric.
\end{corollary}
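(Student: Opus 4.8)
The plan is to recognize both metrics on the universal cover of the Salvetti complex as $L^p$ metrics on a product of Cayley graphs of $\F$, so that \fullref{main} applies to one of them while the observation of Grigorchuk and de la Harpe applies to the other.

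Fix a finite symmetric free generating set $\basis$ of $\F$ and let $\X_1=\X_2$ be the Cayley graph of $\F$ with respect to $\basis$, so that the universal cover $\X$ of the Salvetti complex of $\F\times\F$ is the square complex $\X_1\times\X_2$ and $G=\F\times\F$ acts on it by the coordinate-wise deck transformation action. This action is properly discontinuous, cocompact, and by isometries for either the piecewise Euclidean metric or the $1$--skeleton metric, and in both cases $\X$ is a proper geodesic space (a connected, locally finite, complete piecewise Euclidean complex with finitely many isometry types of cells). Choosing the basepoint $\bp$ to be a vertex, we are in the setting of the definition of a growth tight action. Since growth tightness of $G\act\X$ depends only on the growth exponents of $G$ and its quotients, and these depend only on the induced pseudo-metrics $\bar d$, which in turn only see the orbit $G.\bp$, i.e.\ the $0$--skeleton of $\X$, it suffices to identify the restriction of each metric to the vertex set $G$.

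Next I would carry out those two identifications. Because $\X_1$ and $\X_2$ are length spaces, the length metric associated to the piecewise Euclidean structure on $\X_1\times\X_2$ is the $L^2$ product metric $d((x_1,x_2),(y_1,y_2))=\sqrt{d_1(x_1,y_1)^2+d_2(x_2,y_2)^2}$, whose restriction to vertices is the $L^2$ metric on the product of the two word metrics. On the other hand the $1$--skeleton of $\X_1\times\X_2$ is the Cartesian (box) product graph, whose edges are a product of an edge of one factor with a vertex of the other; its path metric is $d_1+d_2$, the $L^1$ metric, which as recorded in the introduction is the word metric on $\F\times\F$ with respect to $\basis^1=(\basis\times\{1\})\cup(\{1\}\times\basis)$.

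Finally I would invoke the two input results. Since $\F$ is non-elementary and hyperbolic it acts on each $\X_i$ with a strongly contracting element, so \fullref{main} with $n=2$ and $p=2$ shows that $G\act\X$ is growth tight for the $L^2$, hence piecewise Euclidean, metric. For the other metric, Grigorchuk and de la Harpe \cite{GriDeL97} showed that $\F\times\F$ is not growth tight with respect to the generating set $(\basis\times\{1\})\cup(\{1\}\times\basis)$, i.e.\ with respect to the $L^1$, hence $1$--skeleton, metric. I do not anticipate a genuine obstacle here: the only point needing care is the pair of metric identifications above, in particular the observation that the restriction of the piecewise Euclidean metric to the $0$--skeleton is \emph{equal} to --- not merely quasi-isometric to --- the $L^2$ product of word metrics, so that the relevant growth exponents literally coincide.
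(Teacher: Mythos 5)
Your argument is correct and follows essentially the same route the paper intends: identify the piecewise Euclidean metric restricted to vertices as the $L^2$ product of word metrics and the $1$--skeleton metric as the $L^1$ product, then apply \fullref{main} with $n=2$. The only cosmetic difference is that you cite Grigorchuk--de la Harpe for non-growth tightness in the $L^1$ case, whereas the paper's \fullref{main} already contains the $p=1$ non-growth-tightness statement (via \fullref{prop:duality} in its proof), so the single theorem suffices for both directions.
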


We sketch a direct proof of \fullref{corollary:fxf}. The proof of \fullref{main}
 follows the same outline. 

\begin{proof}[Sketch proof of \fullref{corollary:fxf}] 
  
  Let $\X$ be the Cayley graph of $\F$ with respect to $\basis$. Let
  $G=\F\times\F$ be generated by $(\basis\cup\{1\})\times
  (\basis\cup\{1\})$, which induces the $L^\infty$ metric on $\X \times
  \X$. We have $\delta_{G}=2\delta_\F>0$.

  Let $N$ be a non-trivial normal subgroup of $G$. If $N$ has trivial
  projection to, say, the first factor, then $G/N=\F\times(\F/\pi_2(N))$.
  Since $\F$ is growth tight with respect to every word metric,
  $\delta_{\F/\pi_2(N)}<\delta_\F$, so
  $\delta_{G/N}=\delta_\F+\delta_{\F/\pi_2(N)}<2\delta_\F=\delta_G$.

  If $N$ has non-trivial projection to both factors, then there is an element
  $(h_1,h_2)\in N$ with both coordinates non-trivial. For each $(a_1,a_2)N\in
  (\F\times\F)/N=G/N$, choose an element $(a_1',a_2')\in(a_1,a_2)N$ such that
  \[ d \big( (a_1',a_2'),(1,1) \big)= d \big( (a_1,a_2)N,(1,1) \big). \]
  Let $A=\{(a_1',a_2')\mid (a_1,a_2)N\in G/N\}$. We call $A$ a
  \emph{minimal section of the quotient map}. We have
  $\delta_{A,d}=\delta_{G/N, \bar{d}}$.

  Given a non-trivial, reduced word $f$, let $W(f)$ be the subset of elements
  of $\F$ whose expression as a reduced word in $\basis$ contains $f$ as a
  subword. Denote by $\inv{a}$ the inverse of a word $a$ in $\F$. If
  $(a_1',a_2')\in W(h_1)\times W(h_2)$ then there exist $b_i$ and $c_i$
  such that $a_i'=b_ih_ic_i$ for $i=1,\,2$, and
  \[ 
    (a_1',a_2')=(b_1h_1c_1,b_2h_2c_2) =
    (b_1c_1,b_2c_2)\cdot(\inv{c_1}h_1c_1,\inv{c_2}h_2c_2)
  \] 
  So $(b_1c_1,b_2c_2)N= (a_1',a_2')N$, but this contradicts the fact that
  $(a_1',a_2')\in A$, since $|(b_1c_1,b_2c_2)|_\infty<|(a_1',a_2')|_\infty$.
  Therefore, $A\subset (\F\setminus W(h_1))\times\F\cup\F\times (\F\setminus
  W(h_2))$. However, for any non-trivial $f$ the growth exponent of
  $\F\setminus W(f)$ is strictly less than that of $\F$, so the growth
  exponent of $A$ is strictly less than that of $\F\times\F$. \qedhere

\end{proof}

The fact that the growth exponent of $F\setminus W(f)$ is strictly less
than that of $F$ has analogues in formal language theory. A language
$\mathcal{L}$ over a finite alphabet is known as `growth-sensitive' or
`entropy-sensitive' if for every finite set of words in $\mathcal{L}$,
called the \emph{forbidden words}, the sub-language of words that do not
contain one of the forbidden words as a subword has strictly smaller growth
exponent than $\mathcal{L}$. It has been a topic of recent interest to
decide what kinds of languages are growth-sensitive
\cite{CecWoe02,CecWoe03,HusSavWoe10}.

Our approach to growth tightness is to prove a coarse-geometric version of
growth sensitivity, where the forbidden word is a power of a strongly
contracting element.

The first coarse-geometric version of growth sensitivity was used by
Arzhantseva and Lysenok \cite{ArzLys02} to prove growth tightness for
hyperbolic groups.
With Arzhantseva, \cite{ArzCasTao13} we gave a more general
construction that applied to group actions with strongly contracting
elements. 
The idea is that the action of a strongly contracting element closely
resembles the action of an infinite order element of a hyperbolic
group on a Cayley graph.

In \cite{ArzCasTao13} we proved a coarse-geometric version of the statement
that the growth exponent of the set of reduced words in $\F$ that do not
contain $f$ or $\inv{f}$ as subwords is strictly less than the growth
exponent of $\F$. For products this is not enough, since, for example, if
$(f,f)\in N\normal\F\times\F$ we cannot make the element $(f,\inv{f})$
shorter by applying powers of $(f,f)$. We really want to forbid only
positive occurrences of $f$ in each coordinate, so we need to strengthen
our coarse-geometric statement to take orientation into account.

After preliminaries in \fullref{sec:prelim}, we show in
\fullref{sec:contractingineachcoordinate} that an infinite normal subgroup
of $G$ that has infinite projection to each factor contains an element
$\underline{h}$ for which each coordinate is strongly contracting for the
action of the factor group on the factor space.

In \fullref{sec:nolongprojection} we prove the main technical lemma,
\fullref{mainlemma}, which is our oriented growth sensitivity result.

In \fullref{sec:proof} we complete the proof of \fullref{main}.

\section{Preliminaries}\label{sec:prelim}
%!TEX root = Main.tex

For any group $G$, we use $\inv{g}$ to denote the multiplicative inverse of
$g\in G$.

A group is \emph{elementary} if it is finite or has an infinite cyclic
subgroup of finite index.

A \emph{quasi-map} $\pi\from\X\to\mathcal{Y}$ between metric spaces 
assigns to each point $x\in\X$ a subset $\pi(x)\subset \mathcal{Y}$
of uniformly bounded diameter.

\subsection{Strongly Contracting Elements}

We define strongly contracting elements following Sisto\footnote{Sisto
  considers `$\mathcal{PS}$--contracting projections'. 
We use `strong' to indicate the special case that
  $\mathcal{PS}$ is the collection of all geodesic segments in $\X$.}
\cite{Sis11}. See also \cite{ArzCasTao13} for additional reference.

\begin{definition}
Let $(\X,d)$ be a proper geodesic metric space, and
  let $\A \subset \X$ be a subset.
Given a constant $C > 0$, a map $\pi_\A \colon \X \to \A$ is called a
\emph{$C$--strongly contracting projection} if $\pi_\A$ satisfies the
following properties:
\begin{itemize}
  \item For every $a \in \A$, $d \big( a, \pi_\A(a) \big) \le C$.
  \item For every $x, y \in \X$, if $d \big( \pi_\A(x), \pi_\A(y) \big) >
  C$, then for every geodesic segment $\P$ with endpoints $x$ and $y$, we have
  $d(\pi_\A(x), \P) \le C$ and $d(\pi_\A(y), \P) \le C$. 
\end{itemize}

We say the map $\pi_\A$ is a \emph{strongly contracting projection} if
it is $C$--strongly contracting for some $C>0$.
\end{definition}

Fix a base point $\bp \in \X$. Let $G$ be a finitely-generated group that
admits a proper, cocompact, and isometric action on $\X$. 

\begin{definition} \label{def:strongcontracting}
An element $h \in G$ is $C$--\emph{strongly contracting} if $i\mapsto h^i\!.\bp$
is a quasi-geodesic and if there exists $C > 0$ such that, for every
geodesic segment $\P$ with endpoints on $\langle h \rangle.\bp$, there exists a
$C$--strongly contracting projection $\pi_\P \from \X \to \P$. 

An element $h \in G$ \emph{strongly contracting} if there exists a
$C>0$ such that $h$ is $C$--strongly contracting.
\end{definition}

The property of strongly contracting is independent of the base point
$\bp$. 
Since the action is by isometries, a conjugate of a strongly
contracting element is strongly contracting.

Let $h \in G$ be a strongly contracting element. Let $E(h) < G$ be the subgroup such
that $g \in E(h)$ if and only if the Hausdorff distance between $\langle h
\rangle. o$ and $g \langle h \rangle .o$ is bounded. Then $E(h)$ is
\emph{hyperbolically embedded} in the sense of Dahmani-Guirardel-Osin
\cite{DahGuiOsi11}, and $E(h)$ is
the unique maximal virtually cyclic subgroup containing $h$ \cite[Lemma
6.5]{DahGuiOsi11}.
Thus, $E(h)$ is the subgroup that often called the
\emph{elementarizer} or \emph{elementary closure} of $\langle h\rangle$.

\begin{definition} \label{def:quasiaxis}
  
  Given a strongly contracting element $h \in G$ and a point $\bp \in \X$, the set
  $\H = E(h).o$ is called a \emph{quasi-axis} in $\X$ for $h$.

\end{definition}

\begin{lemma}[{\cite[Lemma~2.20]{ArzCasTao13}}] \label{lem:contraction}
  
  If $h \in G$ is strongly contracting, then there exists a
  strongly contracting projection quasi-map $\pi_{\H} \from G \to \H$
  such that $\pi_{ \H}$ is $E(h)$--equivariant.

\end{lemma}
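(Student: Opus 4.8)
The plan is to realize $\pi_\H$ as a coarse nearest-point projection onto $\H$, pushed through the orbit map $g\mapsto g.\bp$, and to extract its strong contraction property from the segment projections furnished by \fullref{def:strongcontracting}. For the basic geometry: since $\langle h\rangle$ has finite index in $E(h)$ and the action is isometric, $\gamma=\langle h\rangle.\bp$ is a quasi-geodesic contained in $\H=E(h).\bp$, and by the defining property of $E(h)$ the Hausdorff distance between $\gamma$ and $\H$ is some finite $D$. For $g\in G$ I set $\pi_\H(g)=\{\,p\in\H\mid d(g.\bp,p)\le d(g.\bp,\H)+1\,\}$, a nonempty subset of $\H$. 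The point of this intrinsically metric definition is that $E(h)$--equivariance is then automatic and exact: for $e\in E(h)$ we have $e.\H=\H$, hence $d(eg.\bp,\H)=d(g.\bp,\H)$, and $d(eg.\bp,e.q)=d(g.\bp,q)$ for all $q$, so $\pi_\H(eg)=e.\pi_\H(g)$. Thus the entire content of the lemma is that $\pi_\H(g)$ has diameter bounded independently of $g$ (so $\pi_\H$ is genuinely a quasi-map, and, taking $g\in E(h)$, satisfies $d(g.\bp,\pi_\H(g))\le C'$) and that $\pi_\H$ obeys the strong contraction inequality.

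I would first reduce these two properties to the same statement for $\gamma$, by patching the $C$--strongly contracting projections $\pi_\P$ into a single quasi-map $\pi_\gamma\from\X\to\gamma$. Fix an exhaustion of $\gamma$ by geodesic segments $\P_n=[h^{-n}.\bp,\,h^n.\bp]$ with $C$--strongly contracting projections $\pi_{\P_n}$. The key comparison is: there is a constant so that whenever $\pi_{\P_n}(x)$ lies more than $10C$ from the endpoints of $\P_n$, then $d\big(\pi_{\P_n}(x),\pi_{\P_m}(x)\big)$ is bounded for all $m\ge n$; this follows by applying the strong contraction of $\pi_{\P_m}$ to the pair $x,\ \pi_{\P_n}(x)$, together with the quasi-geodesic constants of $\gamma$, after disposing of the case where $\pi_{\P_m}(x)$ drifts to an endpoint of $\P_m$ (which a similar application of the contraction inequality rules out for $m$ large, once $x$ is fixed). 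Granting this, for each $x$ the points $\pi_{\P_n}(x)$ coarsely stabilize, so one defines $\pi_\gamma(x)$ to be a point of $\gamma$ within bounded distance of $\pi_{\P_n}(x)$ for all large $n$. The bounded diameter of $\pi_\gamma(x)$ and the strong contraction inequality for $\pi_\gamma$ then follow by choosing, for any given $x$ (or pair $x,y$), one segment $\P_n$ long enough that $\pi_{\P_n}$ already computes $\pi_\gamma$, and quoting the corresponding property of the $C$--strongly contracting $\pi_{\P_n}$.

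Since $\gamma\subseteq\H\subseteq N_D(\gamma)$, coarse nearest-point projection onto $\H$ differs by at most a bounded amount from $\pi_\gamma$ followed by a move of size $\le D$: one checks that $\pi_\H(g)$ as defined above lies within a uniformly bounded distance of $\pi_\gamma(g.\bp)$, and this transfers both the diameter bound and the strong contraction inequality from $\pi_\gamma$ to $\pi_\H$, at the cost of enlarging the constants (if one prefers, replace the ``$+1$'' in the definition of $\pi_\H$ by a larger fixed constant to absorb the errors). Combined with the automatic $E(h)$--equivariance recorded in the first paragraph, this proves the lemma.

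I expect the main obstacle to be the comparison in the second paragraph: controlling how the segment projections $\pi_{\P_n}$ relate to one another, in particular the endpoint effects, which amounts to showing that $\gamma$ behaves like a Morse quasi-geodesic with a coarsely well-defined closest-point projection. Everything downstream of that — the passage from $\gamma$ to $\H$ and the $E(h)$--equivariance — is then either routine or purely formal.
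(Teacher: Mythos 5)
The paper does not prove this lemma itself --- it is quoted from \cite[Lemma~2.20]{ArzCasTao13} --- so there is no in-text argument to compare yours against. That said, your plan is the standard and correct route, and it matches in spirit how this kind of statement is established in the cited reference and in Sisto's work. Defining $\pi_\H$ as coarse nearest-point projection makes $E(h)$--equivariance exact by construction, which cleanly isolates the real content: a uniform diameter bound and the contraction inequality, to be transferred from the segment projections $\pi_{\P_n}$. You correctly locate the technical crux in the patching step, and it can be closed along the lines you suggest. First, each $\pi_{\P_n}$ already coarsely agrees with nearest-point projection onto $\P_n$: apply the contraction inequality to the pair $x$ and an actual nearest point $a\in\P_n$; if $d(\pi_{\P_n}(x),a)$ were large then a geodesic $[x,a]$ would pass $C$--close to $\pi_{\P_n}(x)$, forcing $d(x,\pi_{\P_n}(x))<d(x,a)$ and contradicting minimality. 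This reduces the problem to coarse well-definedness of nearest-point projection onto $\gamma$. For fixed $x$ the near-minimizers of $i\mapsto d(x,h^i.\bp)$ lie in a bounded index range because $\gamma$ is a quasi-geodesic, so the $\pi_{\P_n}(x)$ do stabilize once $n$ is large enough for the given $x$; and the required diameter bound, uniform in $x$ (which also rules out the endpoint-drift scenario you flag), follows by applying the contraction inequality for a sufficiently long $\P_n$ to the pair consisting of $x$ and a putatively far-away near-minimizer, together with the Morse property of $\gamma$ as in the proof of Lemma~\ref{lem:almostgeodesic}. The transfer from $\gamma$ to $\H$, via the finite Hausdorff distance coming from $[E(h):\langle h\rangle]<\infty$, is then formal, as is the equivariance you record in the first paragraph. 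I do not see a gap.
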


\begin{definition}\label{def:equiv}
 If $h\in G$ is strongly contracting and $g\notin E(h)$ define
 $\pi_{g\H}\from \X\to g\H$ by $\pi_{g\H}(x)=g.\pi_\H(\inv{g}.x)$.
\end{definition}

Combining \fullref{lem:contraction} and \fullref{def:equiv}, we may
assume that the strongly contracting projection quasi-maps $\pi_{g\H}$
to translates of $\H$ are $G$--equivariant.

\begin{lemma}\label{lem:almostgeodesic}
  If $h\in G$ is $C$--strongly contracting there exist
  non-negative constants $\lambda$, $\epsilon$, and $\mu$ such that $i\to
  h^i\!.\bp$ is a $(\lambda,\epsilon)$--quasi-geodesic and for
  $0\leq\alpha\leq\beta$ every geodesic from $\bp$ to $h^\beta\!.\bp$ passes
  within distance $\mu$ of $h^\alpha\!.\bp$.
\end{lemma}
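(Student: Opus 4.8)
The first assertion is essentially built into the hypothesis: \fullref{def:strongcontracting} already requires $i\mapsto h^i\!.\bp$ to be a quasi-geodesic, so there are constants $\lambda,\epsilon$ making it a $(\lambda,\epsilon)$--quasi-geodesic, and nothing more is needed for that part. The real content is the fellow-travelling statement, and the plan is to derive it from the strongly contracting projection $\pi_\H\from\X\to\H$ onto the quasi-axis furnished by \fullref{lem:contraction} (extended from $G.\bp$ to all of $\X$ using cocompactness, and, if one likes, with a representative chosen from each value-set so that $\pi_\H$ is a genuine map; these modifications only change constants). Say $\pi_\H$ is $C'$--strongly contracting. The argument splits into two steps: show that every geodesic $\P$ from $\bp=h^0\!.\bp$ to $h^\beta\!.\bp$ stays uniformly close to $\H$, and then use the quasi-line structure of $\H$ to conclude that $\P$ passes near each $h^\alpha\!.\bp$.

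For the first step I would fix $\beta$ and a geodesic $\P$ from $\bp$ to $h^\beta\!.\bp$ and prove $\P\subset N_{\mu_0}(\H)$ for a constant $\mu_0=\mu_0(C')$. If $d(\bp,h^\beta\!.\bp)$ is bounded then $\beta$ is bounded and the whole lemma is trivial, so assume it is large. Choose $z\in\P$ maximising $d(\,\cdot\,,\H)$ and set $q=\pi_\H(z)$; a routine consequence of the two contracting inequalities is $d(z,q)\le d(z,\H)+2C'$, so it suffices to bound $d(z,q)$. Since $d\big(\pi_\H(\bp),\pi_\H(h^\beta\!.\bp)\big)$ is within $2C'$ of $d(\bp,h^\beta\!.\bp)$, hence large, at least one of $d\big(q,\pi_\H(\bp)\big)$, $d\big(q,\pi_\H(h^\beta\!.\bp)\big)$ exceeds $C'$; by symmetry say $d\big(q,\pi_\H(\bp)\big)>C'$. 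Applying the contracting property to the pair $\bp,z$ and the subgeodesic of $\P$ joining them yields a point $w\in\P$ lying on $\P$ between $\bp$ and $z$ with $d(w,q)\le C'$. If moreover $d\big(q,\pi_\H(h^\beta\!.\bp)\big)>C'$, the same applied to $z,h^\beta\!.\bp$ gives $w'\in\P$ between $z$ and $h^\beta\!.\bp$ with $d(w',q)\le C'$; as $w$ and $w'$ flank $z$ on the geodesic $\P$ and $d(w,w')\le 2C'$, we get $d(z,q)\le 3C'$. Otherwise $d\big(q,\pi_\H(h^\beta\!.\bp)\big)\le C'$, so $q$ is within $2C'$ of $h^\beta\!.\bp$; then $w$, which lies on $\P$ no closer to $h^\beta\!.\bp$ than $z$ is, satisfies $d(w,h^\beta\!.\bp)\le 3C'$, forcing $d(z,h^\beta\!.\bp)\le 3C'$ and again $d(z,\H)\le 3C'$. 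Either way $\mu_0=3C'$ works, and maximality of $z$ gives $\P\subset N_{\mu_0}(\H)$.

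The second step is routine. Since $E(h)$ is virtually cyclic, $\H=E(h).\bp$ is quasi-isometric to $\mathbb{R}$ via a quasi-isometry $\psi$ for which $\psi(h^i\!.\bp)$ is coarsely proportional to $i$ (using that $i\mapsto h^i\!.\bp$ is a quasi-geodesic). Composing a nearest-point map $\P\to\H$ with $\psi$ produces a coarsely continuous map $\P\to\mathbb{R}$ whose values at the two endpoints of $\P$ are near $\psi(\bp)$ and near $\psi(h^\beta\!.\bp)$; a coarse intermediate value theorem then gives, for each $0\le\alpha\le\beta$, a point $z\in\P$ whose near-point on $\H$ has $\psi$--value close to $\psi(h^\alpha\!.\bp)$. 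Because $\psi$ is a quasi-isometry that near-point is boundedly close to $h^\alpha\!.\bp$, and $z$ is $\mu_0$--close to it, so $\P$ passes within a uniform $\mu$ of $h^\alpha\!.\bp$; enlarging $\mu$ to absorb the trivial bounded cases finishes the proof.

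I expect the first step to be the main obstacle: the bound $\P\subset N_{\mu_0}(\H)$ has to be genuinely uniform in $\alpha$ and $\beta$, and the case analysis must be arranged so that the contracting property is invoked only when the two projections involved are more than $C'$ apart. A secondary point to watch is that $\H$ is two-ended, so in the second step the coarse intermediate value theorem must be set up (through the quasi-isometry $\psi$ to $\mathbb{R}$, not through the distance-to-$\bp$ function, which is not monotone on $\H$) to deliver a point near $h^\alpha\!.\bp$ itself rather than near a point of $\H$ on the far side of $\bp$.
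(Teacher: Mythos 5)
Your proof is correct, but it takes a genuinely different route from the paper's. The paper's argument is essentially one citation: it invokes Sisto's result (\cite[Lemma~2.9]{Sis11}) that a geodesic $\gamma$ with endpoints on $\langle h\rangle.\bp$ is Morse, and then simply observes that $i\mapsto h^i.\bp$ for $i\in[0,\beta]$ is a $(\lambda,\epsilon)$--quasi-geodesic with endpoints on $\gamma$, so it lies in the $\mu$--neighborhood of $\gamma$ for a $\mu$ depending only on $C$, $\lambda$, $\epsilon$ --- which immediately gives the conclusion. You instead work directly from the strongly contracting projection: first you show that any geodesic from $\bp$ to $h^\beta.\bp$ lies in a uniform neighborhood of $\H$ (your Step 1, which is a correct hands-on contraction argument, essentially re-deriving the ``geodesic near $\H$'' half of the Morse statement from the two contracting inequalities, with a reasonable constant-chasing case analysis), and then you use a quasi-isometry $\psi\colon \H\to\mathbb{R}$ and a coarse intermediate value argument to produce, for each $\alpha$, a point of the geodesic close to $h^\alpha.\bp$ (your Step 2). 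Your Step 2 is logically necessary in your approach precisely because Step 1 gives the wrong direction of containment on its own --- it says the geodesic is near $\H$, not that each $h^\alpha.\bp$ is near the geodesic --- and you correctly flag that the IVT must be routed through $\psi$ rather than the distance-to-$\bp$ function. The tradeoff is that the paper's proof is shorter because it imports the Morse lemma as a black box, while yours is more self-contained but substantially longer and needs more care with constants (e.g.\ $\pi_\H$ is a quasi-map, so ``$q=\pi_\H(z)$'' should be a representative of a bounded-diameter set, which only perturbs $C'$ by an additive amount). Both are valid; there is no gap.
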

\begin{proof}
There exist $\lambda$ and $\epsilon$ such that $i\to
  h^i\!.\bp$ is a $(\lambda,\epsilon)$--quasi-geodesic by definition
  of contracting element.
Let $\gamma$ be a geodesic segment from $\bp$ to $h^{\beta}\!.\bp$.
Then $\gamma$ is Morse, by \cite[Lemma~2.9]{Sis11}.
Thus, there is a $\mu$ depending only on $C$, $\lambda$, and
$\epsilon$ such that every $(\lambda,\epsilon)$--quasi-geodesic
segment with endpoints on $\gamma$ is contained in the
$\mu$--neighborhood of $\gamma$.
But $i\mapsto h^i\!.\bp$ for $i\in [0,\beta]$ is such a
$(\lambda,\epsilon)$--quasi-geodesic, so there is a point of $\gamma$
at distance at most $\mu$ from $h^\alpha\!.\bp$.
\end{proof}

\subsection{Actions on Quasi-trees}
Let $h$ be a contracting element for $G\act\X$ as in the previous
section, and let $\H$ be the quasi-axis of $h$.

In \fullref{mainlemma} we will consider a \emph{free product subset}
\[ Z^*\!*h^m = \bigcup_{i=1}^\infty \big\{ z_1h^m\cdots z_ih^m \mid z_j\in
  Z\setminus \{1\} \big\}\]
for a certain subset $Z\subset G$ and a sufficiently large $m$.
We wish to know that the orbit map from $G$ into $\X$ is an embedding
on this free product set.

This statement recalls the following well known result:
\begin{proposition}[Baumslag's Lemma \cite{Bau62}]
  If $z_1,\dots, z_k$ and $h$ are elements of a free group such that $h$
  does not commute with any of the $z_i$, then $z_1 h^{m_1}\cdots
  z_kh^{m_k}\neq 1$ if all the $|m_i|$ are sufficiently large.
\end{proposition}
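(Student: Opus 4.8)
The plan is to argue by induction on $k$, using the standard normal-form / "no cancellation" method for elements expressed as alternating products in a free group. First I would reduce to the case where $h$ is cyclically reduced: replacing $h$ by $u h u^{-1}$ with $h=uh_0u^{-1}$, $h_0$ cyclically reduced, changes $z_1h^{m_1}\cdots z_kh^{m_k}$ into $(z_1u)h_0^{m_1}(u^{-1}z_2u)h_0^{m_2}\cdots(u^{-1}z_ku)h_0^{m_k}u^{-1}$, and $h$ commutes with $z_i$ if and only if $h_0$ commutes with $u^{-1}z_iu$, so the hypotheses are preserved (one must also observe $z_i\neq 1$ is not needed — only non-commutation with $h$ — and that the trailing $u^{-1}$ does not affect whether the word is trivial). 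So assume $h$ is cyclically reduced of length $\ell\geq 1$.

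Next I would set up the key local cancellation estimate. Write each $z_i$ in reduced form and consider the reduced form of the block $h^{m_i} z_{i+1} h^{m_{i+1}}$: since $h$ is cyclically reduced, in $h^{m_i}$ no cancellation occurs, and when we multiply $h^{m_i}\cdot z_{i+1}$ the cancellation from the right end of $h^{m_i}$ is bounded by $|z_{i+1}|$; similarly on the other side. The crucial point is that $h$ does not commute with $z_{i+1}$, which (for $h$ cyclically reduced) implies $z_{i+1}$ is not a power of the cyclic word underlying $h$, so after cancelling at most $|z_{i+1}|$ letters on each side, a definite "core" of each $h^{m_i}$ survives — quantitatively, if $m_i > 2|z_{i+1}|/\ell + 2|z_i|/\ell + (\text{const})$ then at least one full period of $h^{\pm 1}$ remains uncancelled in the reduced form of the product, and moreover this surviving core cannot be annihilated by interactions further out along the word. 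This is the step I expect to be the main obstacle: making precise that the surviving pieces of consecutive $h^{m_i}$-blocks cannot conspire, across the whole product, to produce total cancellation. The clean way to handle it is to choose the $m_i$ large enough that the reduced word one obtains by naive concatenation-and-local-cancellation is already reduced (no further cancellation propagates), so its length is at least, say, $\sum_i(|m_i|\ell - 2|z_i| - 2|z_{i+1}|) > 0$, hence nonzero.

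Finally, I would package the induction. For $k=1$: $z_1 h^{m_1}=1$ would force $z_1=h^{-m_1}$, so $z_1$ commutes with $h$, contradiction (for all $m_1\neq 0$, and $m_1=0$ gives $z_1=1$ which is excluded by the non-commutation hypothesis since $1$ commutes with $h$). For the inductive step, if $z_1h^{m_1}\cdots z_kh^{m_k}=1$ with all $|m_i|$ large, I would use the length estimate above to derive a contradiction directly; alternatively, conjugating to move $h^{m_k}$ to the front and using that the resulting word $h^{m_k}z_1h^{m_1}\cdots z_{k-1}h^{m_{k-1}}z_k$ has a shorter alternating pattern would reduce to the previous case. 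The only genuine content beyond bookkeeping is the cyclic-word argument that non-commutation with $h$ prevents $z_i$ from aligning with $h$'s periods; everything else is the familiar estimate that in a free group, alternating products of long powers of a fixed nontrivial element with bounded "connectors" are reduced up to bounded truncation, hence nontrivial.
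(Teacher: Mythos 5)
The paper does not prove Baumslag's Lemma; it cites \cite{Bau62} and only remarks that the natural proof certifies the global nontriviality by a local ``no-backtracking'' condition in a tree. So there is no paper proof to compare against. Your approach is the classical combinatorial version of that idea: normalize $h$ to be cyclically reduced, bound cancellation at each junction, and conclude the concatenated word has positive reduced length. That route is sound, and you correctly identify where the content lies, but the key step is flagged rather than closed, and a couple of details are off.

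The central lemma you need but do not prove is: for $h$ cyclically reduced and $z\notin C(h)$, there is a constant $B(z,h)$, independent of $m$ and $m'$, such that the reduced length of $h^{m}zh^{m'}$ is at least $(|m|+|m'|)\,|h|-B$. Your stated bound ``at most $|z_{i+1}|$ letters cancel on each side'' is not by itself enough: if $z_{i+1}$ is fully absorbed into the end of $h^{m_i}$, then $h^{m_i}$ and $h^{m_{i+1}}$ can continue to cancel across the gap, and this secondary cancellation is exactly where non-commutation must be used. (Your phrase ``$z_{i+1}$ is not a power of the cyclic word underlying $h$'' gestures at this, but the relevant fact is $z_{i+1}\notin C(h)=\langle r\rangle$, $r$ the root of $h$, and one must show this bounds how long the reduced form of $z_{i+1}$ can ``ride'' the bi-infinite periodic word $h^{\infty}$ on either end.) The slick way to see it is geometric and matches the paper's stated philosophy: in the Cayley tree, $\langle h\rangle$ and $z_{i+1}\langle h\rangle$ lie on distinct bi-infinite geodesics precisely because $z_{i+1}\notin C(h)$, and two distinct lines in a tree have bounded overlap, giving $B$. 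Conversely, the worry you name as the ``main obstacle'' --- pieces of different $h^{m_i}$-blocks conspiring across the whole product --- cannot happen: free reduction is local, so once each junction is controlled as above, the naive concatenation is already reduced; no propagation of cancellation beyond adjacent blocks is possible. So the difficulty is a single uniform local estimate, not a global one.

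Two smaller points. In the reduction to cyclically reduced $h$, you should conjugate the entire product by $u^{-1}$, giving $(u^{-1}z_1u)h_0^{m_1}\cdots(u^{-1}z_ku)h_0^{m_k}$, rather than leaving a stray trailing $u^{-1}$; as written, ``the trailing $u^{-1}$ does not affect whether the word is trivial'' is not quite right (the truncated expression would need to equal $u$, not $1$). And the ``alternative'' inductive step --- cyclically conjugating to $h^{m_k}z_1h^{m_1}\cdots z_k$ --- does not shorten the alternating pattern, so it does not set up an induction; the direct length estimate, once the uniform junction bound is in hand, is the right way to finish.
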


A convenient way to prove such an embedding result is to work in a
tree, so that the global result, that $z_1 h^{m_1}\cdots
  z_kh^{m_k}\neq 1$, can be certified by a local `no-backtracking'
  condition. 
In our situation, we do not have an action on a tree to work with, but
a construction of Bestvina, Bromberg, and Fujiwara \cite{BesBroFuj10}
produces an action of $G$ on a quasi-tree, a space quasi-isometric to
a simplicial tree, from the action of $G$ on
the $G$--translates of $\H$.
In \cite{ArzCasTao13} we use this quasi-tree construction and a
no-backtracking argument to prove that the orbit map is an embedding of a certain free
product subset. 
The proof of \fullref{mainlemma} consists of choosing an appropriate
free product set to which we can apply the argument from
\cite{ArzCasTao13}.
The details of the construction of the quasi-tree and the proof of the
free product subset embedding are somewhat technical, so we will not
repeat them here (see \cite[Section 3]{BesBroFuj10} and \cite[Section~2.4]{ArzCasTao13} for more details). However, we will make
use of some of Bestvina, Bromberg, and Fujiwara's `projection axioms',
which hold for quasi-axes of contracting elements by work of Sisto
\cite{Sis11}, as recounted below.

Let $\Y$ be the collection of all distinct $G$--translates of $\H$. 
For each $Y \in \Y$, let $\pi_Y$ be the projection map from the above. Set
\[ d^\pi_Y(x,y) = \diam  \{\pi_Y(x), \pi_Y(y)\} . \]

\begin{lemma}[{\cite[Section~2.4]{ArzCasTao13}, cf \cite[Theorem 5.6]{Sis11}}] \label{lem:projection}
There exists $\xi\geqslant 0$ such that for all distinct $X,Y,Z\in\mathbb{Y}$:
  \begin{enumerate}
\item[(P0)]  $\diam\pi_Y(X)\leqslant \xi$
\item[(P1)] At most one of $d_X^\pi(Y,Z)$, $d_Y^\pi(X,Z)$ and
  $d_Z^\pi(X,Y)$ is strictly greater than $\xi$.
\item[(P2)] $|\{V\in\mathbb{Y}\mid
  d^\pi_V(X,Y)>\xi\}|<\infty$
  \end{enumerate}
\end{lemma}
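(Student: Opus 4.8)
The statement is essentially due to Sisto, and the plan is to deduce (P0)--(P2) from the strong contraction property alone, using throughout that every $Y\in\Y$ is a $G$--translate of the single quasi-axis $\H=E(h).\bp$. By \fullref{lem:contraction}, \fullref{def:equiv}, and the discussion after \fullref{def:equiv}, fix a constant $C$ so that each $\pi_Y$ is a $C$--strongly contracting projection quasi-map with $\diam\pi_Y(x)\le C$ for every $x$, and so that the family $(\pi_Y)_{Y\in\Y}$ is $G$--equivariant; this uniformity is what will produce a single $\xi$ working for all triples. I also use that $\H$, lying within finite Hausdorff distance of the $(\lambda,\epsilon)$--quasi-geodesic $\langle h\rangle.\bp$ of \fullref{lem:almostgeodesic} (which is Morse), is a Morse quasi-line, hence so is every $Y\in\Y$ with one common Morse constant $\mu$; and that $g\H$ has finite Hausdorff distance from $\H$ exactly when $g\in E(h)$, so distinct members of $\Y$ are ``transverse''. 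The one substantial geometric input beyond these is that two \emph{distinct} $G$--translates of the strongly contracting set $\H$ have \emph{uniformly} bounded coarse intersection: $\diam\bigl(X\cap N_R(Y)\bigr)$ is bounded in terms of $R$, independently of the distinct pair $X,Y\in\Y$. This is Sisto's contraction machinery \cite[Theorem 5.6]{Sis11}; it is where transversality, the maximality of $E(h)$, and --- via a limiting argument using cocompactness --- the uniformity of constants all enter, and I expect it to be the main obstacle. Everything else is a chase through the contraction inequality and the properness of the action.

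For (P0), fix distinct $X,Y\in\Y$ and suppose $\diam\pi_Y(X)$ is large; then there are $x_1,x_2\in X$ with $d\bigl(\pi_Y(x_1),\pi_Y(x_2)\bigr)>C$, so every geodesic $[x_1,x_2]$ contains a point within $C$ of $\pi_Y(x_1)$ and a point within $C$ of $\pi_Y(x_2)$ (strong contraction), each of which lies within $\mu$ of $X$ (Morse). Hence $X$ contains two points lying within $C+\mu$ of $Y$ whose mutual distance is at least $d\bigl(\pi_Y(x_1),\pi_Y(x_2)\bigr)-2(C+\mu)$, so $\diam\bigl(X\cap N_{C+\mu}(Y)\bigr)$ is large --- contradicting uniform bounded coarse intersection. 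Thus $\diam\pi_Y(X)\le\xi_0$ for some $\xi_0$ independent of the pair.

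For (P1), fix distinct $X,Y,Z\in\Y$ and choose $\xi$ large relative to $\xi_0$ and $C$. I will establish the Behrstock inequality, that $d^\pi_Y(X,Z)>\xi$ implies $d^\pi_X(Y,Z)\le\xi$; this forces at most one of the three displayed quantities to exceed $\xi$. By (P0), each of the sets $\pi_Y(X),\pi_Y(Z),\pi_X(Y),\pi_X(Z)$ has diameter $\le\xi_0$, so one may reduce to projections of single points. The argument is then the standard case analysis for contracting projections: pick $x\in X$, $z\in Z$ and a geodesic $[x,z]$, use the separation hypothesis and strong contraction of $\pi_Y$ to locate where $[x,z]$ runs near $Y$, and apply the contraction property of $\pi_X$ (which is coarsely Lipschitz, by a short argument from contraction) along $[x,z]$ to pin $\pi_X(z)$ to within a bounded distance of $\pi_X(Y)$. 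I would follow the treatment in \cite[Section~2.4]{ArzCasTao13}; the only delicate point is arranging the additive constants so that one $\xi$ suffices.

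For (P2), fix distinct $X,Y\in\Y$, points $p\in X$, $q\in Y$, and a geodesic $\gamma$ from $p$ to $q$, and take $\xi>2\xi_0+C$. If $V\in\Y$ has $d^\pi_V(X,Y)>\xi$, then since $\pi_V(X)$ and $\pi_V(Y)$ have diameter $\le\xi_0$ we get $d\bigl(\pi_V(p),\pi_V(q)\bigr)\ge\xi-2\xi_0>C$, so by strong contraction $\gamma$ meets the $C$--neighborhood of $\pi_V(p)\in V$; hence $V$ intersects the bounded set $N_C(\gamma)$. Writing $V=gE(h).\bp$, any point of $V\cap N_C(\gamma)$ has the form $g'.\bp$ with $g'\in gE(h)$, and then $g'E(h).\bp=V$. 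Therefore $g'\mapsto g'E(h).\bp$ maps the set $\{\,g'\in G : g'.\bp\in N_C(\gamma)\,\}$ --- finite because the action is properly discontinuous and $\X$ is proper --- onto $\{\,V\in\Y : d^\pi_V(X,Y)>\xi\,\}$, which is therefore finite. Enlarging $\xi$ once more if necessary so that a common value serves in (P0), (P1), and (P2) completes the proof.
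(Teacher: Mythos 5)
The paper does not prove this lemma; it cites \cite[Section~2.4]{ArzCasTao13} and \cite[Theorem 5.6]{Sis11} wholesale, so there is no in-paper argument to compare against line by line. Your sketch is a sound reconstruction of the cited argument, and it correctly identifies the load-bearing input: uniform bounded coarse intersection of distinct $G$--translates of $\H$, which you honestly defer to Sisto and flag as the genuine obstacle. Given that, (P0) follows cleanly from the Morse property of $X$ plus strong contraction of $\pi_Y$ exactly as you write; (P1) is the Behrstock inequality, which is standard once (P0) and the contraction inequality are in hand (you defer the constant-chasing to \cite{ArzCasTao13}, which is fair); and your (P2) argument is a nice shortcut --- rather than deducing finiteness abstractly, you use strong contraction to force any $V$ with a $\xi$--large projection to come $C$--close to a fixed compact geodesic $\gamma$ from $X$ to $Y$, and then properness of the action does the counting, via the observation that each such $V=gE(h).\bp$ contains an orbit point $g'.\bp \in N_C(\gamma)$ with $g'E(h).\bp = V$. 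That route is cleaner and more self-contained than what one usually sees. In short: no gap beyond the one you explicitly attribute to Sisto, which is the same reliance the paper itself has; and your (P2) takes a slightly different, more direct route exploiting properness rather than treating finiteness as a formal consequence of (P0)--(P1).
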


\section{Elements that are Strongly Contracting in each Coordinate}\label{sec:contractingineachcoordinate}
%!TEX root = Main.tex

Let $G$ be a finitely generated, non-elementary
group acting properly discontinuously and cocompactly by isometries on a
based proper geodesic metric space $(\X,d,\bp)$ such that there exists an
element $h\in G$ that is strongly contracting for $G\act\X$. Let
$\H=E(h).\bp$. Let $C$ be the contraction constant for $\pi_\H$
from \fullref{lem:contraction}, and let $\xi$ be the constant of
\fullref{lem:projection}. For any $x \in \X$ and any $r > 0)$, denote by
$B_r(x)$ the open ball of radius $r$ about $x$.

\begin{lemma}\label{lem:powers}
  Let $p$ be a point of $\H$. Let $g$ be an element of $G$. There exists a
  constant $D$ such that either some non-trivial power of
  $g$ is contained in $\left<h\right>$ or for all $n>0$ we have
  $d_\H^\pi(g^n\!.p,p)\leq 2d(p,g.p)+D$.
\end{lemma}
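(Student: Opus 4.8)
The plan is to analyze the sequence of points $p, g.p, g^2.p, \ldots, g^n.p$ and their projections to $\H$, using the projection axioms of \fullref{lem:projection} together with equivariance of the projections. The dichotomy in the statement is governed by the translate $g\H$: either $g\H = \H$ (equivalently, some power of $g$ conjugates $\langle h\rangle$ back onto itself in a way that, since $E(h)$ is virtually cyclic, forces a power of $g$ into $\langle h\rangle$ — this is the first alternative), or $g\H \neq \H$ and we must bound $d_\H^\pi(g^n.p, p)$ uniformly in $n$.

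So assume $g\H \neq \H$; more generally I expect to need that $g^i\H \neq \H$ for all $i \neq 0$, which should follow from the first alternative failing, using that $E(h)$ is the unique maximal virtually cyclic subgroup containing $h$. The key estimate is a ``linking'' argument: by equivariance, $\pi_{g^i\H}(g^i.p) = g^i.\pi_\H(p)$, and $g^i.p \in g^i\H$, so $d(g^i.p, g^i\H) \le C$. The idea is that the geodesic from $p$ to $g^n.p$ must pass near each intermediate translate $g^i\H$ for $0 \le i \le n$, in the correct order, because consecutive translates $g^i\H$ and $g^{i+1}\H$ are all $g^i$--translates of the pair $\H, g\H$, hence at uniformly bounded projection distance from one another. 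Applying (P1) to the triple $\H$, $g^i\H$, $g^n\H = g^n\H$ — and using (P0), (P2) to control how the finitely many ``large-projection'' translates are arranged along a geodesic from $p$ to $g^n.p$ — one shows that $\pi_\H(g^n.p)$ is within a bounded amount of $\pi_\H(g.p)$, because after the first step the remaining translates $g^i\H$ for $i \ge 1$ all project into a bounded subset of $\H$ (this is where $d(p, g.p)$ enters: $d_\H^\pi(p, g.p) \le \diam\{\pi_\H(p), \pi_\H(g.p)\}$ and the orbit point $g.p$ is within $d(p,g.p)$ of $p$, so $d_\H^\pi(g^n.p, p) \le d_\H^\pi(g^n.p, g.p) + d_\H^\pi(g.p, p)$, and the first term is bounded by $2d(p,g.p) + \text{const}$ via a symmetric argument, or directly by the contraction property and $\diam\pi_\H(g\H)\le\xi$).

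Concretely, the steps I would carry out are: (1) Reduce to the case $g^i\H \neq \H$ for all $i\neq 0$, peeling off the first alternative using maximality/virtual cyclicity of $E(h)$. (2) Record that $\pi_\H(g^i\H)$ has diameter $\le \xi$ by (P0), and that $d(g^i.p, g^i\H)\le C$ by equivariance. (3) Show the translates $\H = g^0\H, g^1\H, \ldots, g^n\H$ are ``chained'': apply (P1) and induction to see that for $i \ge 1$, the projection $\pi_\H(g^i.p)$ lies within a bounded neighborhood of $\pi_\H(g.p)$; the bound comes from the triangle-type inequality $d_\H^\pi(g.p, g^i.p) \le$ (bounded constant), since any geodesic from $g.p$ to $g^i.p$ has, by the contracting property applied to $\H$, projection image of controlled diameter unless it fellow-travels $\H$ — but it cannot, as it stays near the chain $g^1\H, \ldots, g^i\H$, all distinct from $\H$ and each at bounded projection distance from $\H$ by (P1). (4) Combine: $d_\H^\pi(g^n.p, p) \le d_\H^\pi(g^n.p, g.p) + d_\H^\pi(g.p, p) + \diam\pi_\H(p) \le D' + (d(p,g.p)+2C+\xi) + \xi$, and bookkeep the constants so the final bound reads $2d(p,g.p) + D$.

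The main obstacle I anticipate is step (3): making precise that the chain of translates forces the geodesic from $p$ to $g^n.p$ to ``march through'' the $g^i\H$ in order, so that $\pi_\H$ of the far endpoint cannot wander back. This is exactly the kind of Behrstock-inequality / ordering argument that underlies the Bestvina--Bromberg--Fujiwara quasi-tree construction; I would either invoke the ordering of translates along a geodesic that comes out of (P0)--(P2) directly, or — cleaner — observe that by equivariance the configuration $(\H, g^i\H)$ is the $g^i$--image of $(\text{stuff})$ and run an explicit induction on $n$ showing $d_\H^\pi(g^{n}.p,p) \le d_\H^\pi(g^{n-1}.p,p) + (\text{something that telescopes to }0\text{ after the first term})$, with the first term $d_\H^\pi(g.p,p) \le d(p,g.p)$ absorbing the linear dependence.
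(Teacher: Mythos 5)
Your overall strategy matches the paper's opening: reduce to the case where no non-trivial power of $g$ lies in $E(h)$ (so the translates $g^i\H$ are pairwise distinct), and then use the projection axioms of \fullref{lem:projection} to control $d^\pi_\H(g^n\!.p,p)$ in terms of $d(p,g.p)$. But the crux of the lemma --- your step (3), obtaining a bound uniform in $n$ --- is exactly where your proposal stops short of being a proof, and you flag this yourself as ``the main obstacle.'' Neither the chaining/ordering argument nor the telescoping induction is actually carried through. In particular the telescoping version is not obviously closeable: a naive inductive estimate of the form $d^\pi_\H(g^n\!.p,p)\le d^\pi_\H(g^{n-1}\!.p,p)+\text{(const)}$ gives a bound linear in $n$, which is useless, and arranging for the increment to vanish after the first step is precisely the content that needs proving, not a bookkeeping device.

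The paper sidesteps the chaining/ordering machinery entirely with a single application of (P1) from \fullref{lem:projection} to the one triple $\H$, $\inv{g}\H$, $g^{n-1}\H$. By equivariance, $d^\pi_{\inv{g}\H}(\H,g^{n-1}\H)=d^\pi_\H(g\H,g^n\H)\ge d^\pi_\H(g^n\H,p)-d^\pi_\H(g\H,p)$, and $d^\pi_\H(g\H,p)$ is bounded by $d(p,g.p)$ plus constants depending only on $C$ and $\xi$ (this is essentially your ``first step'' estimate). So if $d^\pi_\H(g^n\H,p)$ were large, the left-hand side would exceed $\xi$, and (P1) would force $d^\pi_\H(\inv{g}\H,g^{n-1}\H)\le\xi$. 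But that quantity is itself bounded below by $d^\pi_\H(g^n\H,p)$ minus two further terms controlled by $d(p,g.p)$, namely $d^\pi_\H(p,\inv{g}\H)$ and $d^\pi_\H(g^n\H,g^{n-1}\H)=d^\pi_{\inv{g}^n\H}(\H,\inv{g}\H)$ (again using equivariance and the analogue of the first estimate applied to $g^{\pm 1}$). Combining gives $d^\pi_\H(g^n\H,p)\le 2d(p,g.p)+D$ with $D=3C+4\xi$, valid for all $n$ simultaneously, with no induction and no ordering of intermediate translates. If you want to complete your proposal along the paper's lines, this single (P1) inequality is what replaces the whole of your step (3); the intuition you describe (the Bestvina--Bromberg--Fujiwara ordering of large projections along a geodesic) would also work in principle, but it invokes substantially more machinery than the lemma requires.
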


\begin{proof}
Since $\left<h\right>$ is a finite index subgroup of $E(h)$, if some non-trivial
  power of $g$ is contained in $E(h)$ then some non-trivial power of
  $g$ is contained in $\langle h\rangle$, and we are done. 
Thus, we may assume that no non-trivial power of $g$
  is contained in $E(h)$. 
This implies that if $m\neq n$ then $g^m\H\neq
  g^n\H$. 

  Let $z$ be a point on a geodesic from $p$ to $g.p$ in
  $B_C(\pi_\H(g.p))$.
Let $\xi$ be the constant of \fullref{lem:projection}.
Axiom (P0) of \fullref{lem:projection} says $\diam \pi_\H(g\H)\leq\xi$.
  \begin{align*}
    d(p,g.p)
    & = d(p,z)+d(z,g.p) \\
    & \geq d \big( p,\pi_\H(g.p) \big) - C +d(z,g.p)\\
    & \geq d^\pi_\H(p,g\H) - C - \xi +d(z,g.p)\\
&\geq d^\pi_\H(p,g\H) - C - \xi
  \end{align*}

  By a similar argument, for every $k\neq 0,\pm 1$,
  \[ d(p,g.p)\geq d^\pi_{g^k\H}(\H,g^{\pm 1}\H)-2C-2\xi\] 
  
  Using the above we obtain that, for any $n>1$, 
  \begin{align*}
    d^\pi_{\bar{g}\H}(\H,g^{n-1}\H)
    &=d^\pi_{\H}(g\H,g^n\H)\geq d^\pi_{\H}(g^n\H,p)-d^\pi_{\H}(g\H,p)\\
    &\geq d^\pi_{\H}(g^n\H,p)-d(g.p,p)-C-\xi
  \end{align*}
  Suppose that $d^\pi_{\H}(g^n\H,p)-d(g.p,p)-C-\xi>\xi$. 
The previous inequality says $d^\pi_{\bar{g}\H}(\H,g^{n-1}\H)>\xi$, so
 (P1)
  of \fullref{lem:projection} implies $\xi \geq
  d^\pi_{\H}(\inv{g}\H,g^{n-1}\H)$, hence:
  \begin{align*}
    \xi \geq d^\pi_{\H}(\inv{g}\H,g^{n-1}\H)
    &\geq
    d^\pi_{\H}(g^n\H,p)-d^\pi_{\H}(p,\inv{g}\H)-d^\pi_{\H}(g^n\H,g^{n-1}\H)\\
    &\geq
    d^\pi_{\H}(g^n\H,p)-d^\pi_{\H}(p,\inv{g}\H)-d^\pi_{\bar{g}^n\H}(\H,\inv{g}\H)\\
    &\geq d^\pi_{\H}(g^n\H,p)-2d(g.p,p)-3C-3\xi
  \end{align*}

  Thus, $d^\pi_{\H}(g^n\H,p)\leq 2d(g.p,p)+D$ for $D=3C+4\xi$.
\end{proof}

\begin{lemma}\label{lem:uniformpowers}
  For every $g\in G$ there exists an $l> 0$ and an $n'\geq 0$ such that for
  all $m>0$ and all $n\geq n'$, except possibly one, the elements
  $g^{lm}h^n$ and $h^ng^{lm}$ are strongly contracting.
\end{lemma}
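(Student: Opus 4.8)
The plan is to use \fullref{lem:powers} to control how the elements $g^{lm}h^n$ interact with translates of the quasi-axis $\H$, and thereby verify the hypotheses of \fullref{def:strongcontracting}. First I would dispose of the trivial case: if some non-trivial power of $g$ lies in $\langle h\rangle$, then for a suitable $l$ the element $g^l$ is itself a power of $h$, so $g^{lm}h^n = h^{n+ml\cdot(\text{exponent})}$ is a power of $h$ (hence strongly contracting) for all but at most one choice of $n$ (the one making the total exponent zero), and similarly for $h^ng^{lm}$; so assume henceforth that no non-trivial power of $g$ lies in $\langle h\rangle$. Then \fullref{lem:powers} gives a constant $D$ with $d_\H^\pi(g^N\!.p,p)\le 2d(p,g.p)+D$ for every $N>0$ and every $p\in\H$.

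Next I would fix $l$ so that $g^l$ moves $\H$ far enough; concretely, using properness of the action and \fullref{lem:almostgeodesic}, choose $l$ large enough that $d(\bp, g^l\!.\bp)$ exceeds the relevant threshold, and set $h' := g^{lm}h^n$. The key geometric claim is that the orbit $i\mapsto (h')^i\!.\bp$ fellow-travels a concatenation of translates of $\H$ and of $\langle h\rangle$-orbits: the $h^n$ parts lie along translates $g^{lm}h^{\ell}\cdots\H$ while the $g^{lm}$ parts are short relative to $n$ once $n\ge n'$. The projection axioms (P0)--(P2) of \fullref{lem:projection}, together with the bound from \fullref{lem:powers}, guarantee that consecutive translates of $\H$ in this concatenation have uniformly bounded mutual projection, which is exactly the no-backtracking condition that makes the concatenation a quasi-geodesic with the Morse property. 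Taking $n'$ large enough that $n'$ dominates $2d(p,g.p)+D$ and the quasi-geodesic constants $(\lambda,\epsilon,\mu)$ of \fullref{lem:almostgeodesic}, one gets that $i\mapsto (h')^i\!.\bp$ is a quasi-geodesic and that geodesics between points of $\langle h'\rangle.\bp$ stay uniformly close to it; that a strongly contracting projection onto such geodesics exists then follows because $h'$ is a ``long word'' built from the strongly contracting $h$, so its axis inherits the contracting property of $\H$. The same argument applies verbatim to $h^ng^{lm}$, which is a conjugate of $g^{lm}h^n$ by $g^{-lm}$ and hence strongly contracting as soon as the latter is; alternatively run the symmetric estimate directly. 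The ``except possibly one'' clause for $n$ is inherited from \fullref{lem:powers}, where one value of $n$ may fail the separation estimate.

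The main obstacle I anticipate is making precise and uniform the claim that the orbit of $h'=g^{lm}h^n$ fellow-travels the prescribed concatenation of translates of $\H$ with constants independent of $m$: one must check that the short $g^{lm}$-segments do not accumulate over the $m$ copies in a way that destroys the quasi-geodesic bound, and that the value of $n'$ can be chosen before $m$. This is handled by noting that within a single period of $h'$ there is exactly one $g^{lm}$-factor and one $h^n$-factor, so the ``bad'' contribution per period is a bounded multiple of $d(p,g.p)$ independent of $m$ (by \fullref{lem:powers}), while the ``good'' contribution grows linearly in $n$; choosing $n'$ to beat the bad constant makes the estimate uniform in both $m$ and $n\ge n'$. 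The remaining verification — that a concatenation with uniformly bounded backtracking, built from translates of a fixed strongly contracting set $\H$, is itself strongly contracting — is precisely the quasi-tree/no-backtracking machinery of Bestvina--Bromberg--Fujiwara recounted before \fullref{lem:projection}, applied here exactly as in \cite[Section~2.4]{ArzCasTao13}.
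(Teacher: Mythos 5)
Your overall strategy---dispose of the degenerate case where $\langle g\rangle$ and $\langle h\rangle$ share a power, and in the generic case invoke \fullref{lem:powers} to bound the $\H$--projection of the powers of $g$ and then conclude that $g^{lm}h^n$ is strongly contracting for $n$ large---is the same as the paper's. Where you diverge is in how you close the argument. You propose to re-derive strong contraction of $g^{lm}h^n$ from scratch via a fellow-traveling / no-backtracking argument using the projection axioms (P0)--(P2) and the Bestvina--Bromberg--Fujiwara quasi-tree machinery. The paper instead applies a ready-made criterion, \cite[Lemma~5.2]{Sis11}: choose $p\in\H$ minimizing $d(p,g.p)$, take $n'$ so that $d(p,h^{n'}\!.p)\geq 4d(p,g.p)+3D$, and observe that for $n\geq n'$ the displacement $d(p,h^n\!.p)$ then dominates $d(\pi_\H(g^m\!.p),p)+d(p,\pi_\H(\inv{g}^{m}\!.p))+D$ for all $m>0$; this is exactly the hypothesis of Sisto's lemma, and \fullref{lem:powers} is what makes the left side large and the right side bounded. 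Sisto's lemma precisely packages the geometric picture you sketch; re-deriving it would be a substantial detour, and your outline leaves the crucial step---that the concatenated path inherits \emph{strong contraction} for geodesics between orbit points, not merely quasi-geodesicity or the Morse property---entirely to hand-waving. Note also that the BBF quasi-tree construction is invoked in this paper only later, in the proof of \fullref{mainlemma}, to embed a free product set into $\X$; it is not what is used to certify that individual elements are strongly contracting.

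Two concrete errors. First, your instruction to ``choose $l$ large enough that $d(\bp,g^l\!.\bp)$ exceeds the relevant threshold'' is backwards. In the generic case the paper sets $l=1$, and enlarging $l$ would actively hurt: the estimate of \fullref{lem:powers}, applied to $g^l$ in place of $g$, reads $d^\pi_\H(g^{lm}\!.p,p)\leq 2d(p,g^l\!.p)+D$, so a larger $l$ worsens the projection bound and forces a larger $n'$. The goal is to keep the $\H$--projection of every $g^{lm}$ uniformly small, which \fullref{lem:powers} already gives with $l=1$; $l>1$ is needed only in the degenerate case. Second, you attribute the ``except possibly one'' clause to \fullref{lem:powers}, ``where one value of $n$ may fail the separation estimate.'' That is a misreading: \fullref{lem:powers} holds for every $n>0$ with no exception. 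The single excluded value of $n$ arises only in the degenerate sub-case $g^a=h^b$ with $b<0$, where $g^{am}h^n=h^{mb+n}$ is trivial precisely when $n=-mb$; in the generic case nothing is excluded.
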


\begin{proof}
  Suppose there exists a minimal $a>0$ and $b$ such that $g^a=h^b$. If
  $b>0$ let $l=a$ and let $n'=0$, so that $g^{lm}h^n=h^{bm+n}$ is a
  positive power of $h$. If $b=0$ let $l=a$ and $n'=1$ so that
  $g^{lm}h^n=h^n$ is a positive power of $h$.
  If $b< 0$ let $l=a$, $n'=0$, and $n\geq n'$ such that $n\neq
  -mb$. Then $g^{lm}h^n$ is a non-zero power of $h$.  

  If no non-trivial power of $g$ is contained in $\left<h\right>$, let $l=1$. By
  \fullref{lem:powers}, there exists a $D'$ such that for every $p\in \H$
  and every $m>0$ we have $d_\H^\pi(g^m.p,p)\leq 2d(p,g.p)+D'$. Let $D$ be
  the maximum of $D'$ and the constant $D$ from \cite[Lemma~5.2]{Sis11}.
  Let $p\in \H$ be a point such that $d(p,g.p)$ is minimal. Let $n'$ be
  large enough so that $d(p,h^{n'}\!\!.p)\geq 4d(p,g.p)+3D$. Then for $n\geq
  n'$ we have $d(p,h^n\!.p)\geq
  d(\pi_\H(g^m\!.p),p)+d(p,\pi_\H(\inv{g}^{m}\!.p))+D$. This implies
  $g^{lm}h^n$ is strongly contracting by \cite[Lemma~5.2]{Sis11}.
  $h^ng^{lm}$ is also strongly contracting as it is conjugate to
  $g^{lm}h^n$.
\end{proof}

For $i=1,\dots, n$, let $G_i$ be a non-elementary group acting properly
discontinuously and cocompactly by isometries on a proper, based, geodesic
metric space $(\X_i,d_i,\bp_i)$.
Assume, for each $i$, that $G_i\act \X_i$ has a strongly contracting element. 
Let $G=G_1\times\cdots\times G_n$.
Let $\chi_i\from G\to G_i$ be
projection to the $i$--th coordinate.

\begin{lemma}\label{lemma:contracting}
  Let $N$ be an infinite normal subgroup of $G$ such that $\chi_i(N)$ is
  infinite for all $i$. There exists an element $\underline{h}=(h_1,\dots,
  h_n)\in N$ such that $h_i$ is a strongly contracting element for $G_i
  \act X_i$.
\end{lemma}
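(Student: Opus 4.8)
The plan is to build $\underline{h}$ one coordinate at a time, using the normality of $N$ to produce an element with strongly contracting behaviour in whichever coordinate we are currently working on, and then to combine these using \fullref{lem:uniformpowers} so as not to destroy the good coordinates already obtained. First I would fix, for each $i$, a strongly contracting element $a_i \in G_i$ for $G_i \act \X_i$, guaranteed by hypothesis. Since $\chi_i(N)$ is infinite and $G_i$ is non-elementary, $\chi_i(N)$ is itself an infinite normal subgroup of $G_i$, so it contains an element $g_i^{(0)}$ with $\chi_i\big(g_i^{(0)}\big)$ of infinite order; lifting, we get $\underline{g}^{(0)} = (g_1^{(0)},\dots,g_n^{(0)}) \in N$ whose image in each coordinate has infinite order. (If some coordinate image were to lie in a finite subgroup one replaces it via the same argument; the point is only that no coordinate is "trivial" in a way that blocks the construction.)

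Next I would run an induction on the coordinate index. Suppose we have produced $\underline{h}^{(k)} = (h_1,\dots,h_k, g_{k+1},\dots,g_n) \in N$ with $h_1,\dots,h_k$ strongly contracting in their respective factors. To fix coordinate $k+1$: apply \fullref{lem:uniformpowers} to the element $g_{k+1} \in G_{k+1}$ together with a strongly contracting element $a_{k+1}$ playing the role of $h$ there, obtaining $l > 0$ and $n' \ge 0$ so that $g_{k+1}^{lm} a_{k+1}^{n}$ is strongly contracting for all $m>0$ and all but at most one $n \ge n'$. The subtlety is that $a_{k+1}$ need not come from an element of $N$, so I would instead use normality: conjugating $\underline{h}^{(k)}$ by a suitable element of $G$ (supported in the $(k+1)$st coordinate) keeps us inside $N$ and changes $g_{k+1}$ to a conjugate, which is still strongly-contracting-producing in the sense of \fullref{lem:uniformpowers} since that lemma's conclusion is conjugation-invariant; taking a product of $\underline{h}^{(k)}$ with such a conjugate (again in $N$) and passing to an appropriate power realizes $g_{k+1}^{lm}a_{k+1}^n$-type behaviour in coordinate $k+1$ using only elements of $N$. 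Because \fullref{lem:uniformpowers} gives the conclusion for \emph{all} sufficiently large $n$ except one, and for all $m$, there is enough freedom to also guarantee — by choosing $m$ and $n$ large — that the first $k$ coordinates remain strongly contracting: strongly contracting is preserved under taking high powers, since $i \mapsto (h_j^{lm}a_j^{n})^i.\bp_j$ is still a quasi-geodesic fellow-travelling $\langle h_j\rangle.\bp_j$ and the projection maps persist. Iterating from $k=0$ to $k=n$ yields the desired $\underline{h} \in N$.

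The main obstacle I anticipate is bookkeeping the interaction between coordinates: fixing coordinate $k+1$ must not spoil coordinates $1,\dots,k$, and since we only have control "for all but at most one $n$" in each application of \fullref{lem:uniformpowers}, one has to check that finitely many such exclusions can be simultaneously avoided — this is fine because each exclusion removes only one value of the relevant exponent, so a large enough common choice works. A secondary point requiring care is that \fullref{lem:uniformpowers} is stated for a single group acting on a single space, so when applying it in coordinate $i$ I must be sure the relevant element of $G_i$ and strongly contracting element $a_i$ genuinely live in $G_i \act \X_i$; this is immediate from the product structure and the fact that $\chi_i(N) \normal G_i$ is infinite. Once these are handled the proof is essentially a finite iteration of \fullref{lem:uniformpowers} combined with the observation that high powers of strongly contracting elements are strongly contracting.
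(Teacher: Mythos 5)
Your outline — build $\underline{h}$ inductively coordinate by coordinate and use \fullref{lem:uniformpowers} to avoid spoiling previously-fixed coordinates — is the same as the paper's, and you also correctly observe that the ``all but one'' exclusions from each application of \fullref{lem:uniformpowers} are finite in number and can be avoided simultaneously. However, there is a genuine gap at the crucial first step. You fix a strongly contracting element $a_{k+1}\in G_{k+1}$ that is \emph{not} assumed to lie in $\chi_{k+1}(N)$, and then you need $g_{k+1}^{lm}a_{k+1}^n$ to arise from an element of $N$. You acknowledge this problem, but your proposed fix — conjugating $\underline{h}^{(k)}$ by an element supported in coordinate $k+1$ and multiplying — does not produce anything of the form $g_{k+1}^{lm}a_{k+1}^n$: conjugation and products of elements of $N$ only ever yield elements of $\chi_{k+1}(N)$ in that slot, and $a_{k+1}$ never enters. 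The missing ingredient is that $\chi_{k+1}(N)$, being an infinite normal subgroup of a non-elementary group acting with a strongly contracting element, itself \emph{contains} a strongly contracting element (this is \cite[Proposition~3.1]{ArzCasTao13}). Once you have, for each $i$, an element $\underline{g}_i\in N$ whose $i$th coordinate is strongly contracting, the problem you were trying to route around disappears: at step $k+1$ you apply \fullref{lem:uniformpowers} in coordinate $j\le k$ with $g=\chi_j(\underline{g}_{k+1})$ (arbitrary) and $h=f_j$ (the strongly contracting element already fixed), and in coordinate $k+1$ with the roles flipped, then take a common power of $\underline{g}_{k+1}$ against a common power of $\underline{f}$.

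A second, smaller point: your justification that the first $k$ coordinates remain strongly contracting — ``strongly contracting is preserved under taking high powers'' — is not the reason. After the $(k+1)$st step, coordinate $j\le k$ holds an element of the form $a_{k+1,j}^{lm}f_j^{\lambda}$, which is a \emph{product}, not a power of $f_j$. The reason this is strongly contracting is precisely the content of \fullref{lem:uniformpowers}: multiplying an arbitrary element $g$ (here $a_{k+1,j}$) by a high power of a strongly contracting element $h$ (here $f_j$) produces a strongly contracting element, with a controlled divisibility condition on the exponent of $g$ and a one-value exclusion on the exponent of $h$. Taking high powers alone would not help, since $a_{k+1,j}$ can be entirely unrelated to $f_j$.
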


\begin{proof}
  $\chi_i(N)$ is an infinite normal subgroup of $G_i$, so it contains a
  strongly contracting element by
  \cite[Proposition~3.1]{ArzCasTao13}. 
For each $i$, let
  $\underline{g}_i=(a_{i,1},\dots, a_{i,n})\in N$ such that $a_{i,i}$ is a
  strongly contracting element for $G_i\act\X_i$.

  We will show by induction that there is a product of the
  $\underline{g}_i$ that gives the desired element $\underline{h}$. 
The element $\underline{g}_1$ has a strongly contracting element in
its first coordinate.
Suppose
  that there is a product $\underline{f}=(f_1,\dots,f_n)$ of
  $\underline{g}_1,\dots,\underline{g}_i$ such that the first $i$
  coordinates are strongly contracting elements in their coordinate spaces.

  For $1\leq j \leq i$ there exists an $l_j$ and  an $n'_j$ as in
  \fullref{lem:uniformpowers} such that for all $m$ and all $n\geq n'_j$,
  except possibly one, we have $a_{i+1,j}^{l_jm}f_j^{n}$ is strongly
  contracting. Similarly, there are $l_{i+1}$ and $n'_{i+1}$ such that
  $a_{i+1,i+1}^nf_{i+1}^{l_{i+1}m}$ is strongly contracting for all $m>0$
  and $n\geq n'_{i+1}$.

  Let $l$ be the least common multiple of $l_1,\dots, l_i$. Let $m$ be
  large enough so that $ml\geq n_{i+1}'$. Let
  $\lambda_k=l_{i+1}(k+\max_{j=1,\dots,i} n'_j)$, where $k\geq 0$ varies.

  Consider $\underline{g}_{i+1}^{ml}\underline{f}^{\lambda_k}$. For $1\leq
  j\leq i$, the $j$-th coordinate is strongly contracting for all except
  possibly one value of $k$, since $ml$ is a multiple of $l_j$ and
  $\lambda_k\geq n'_j$. Similarly, the $(i+1)$--st coordinate is strongly
  contracting for all except possibly one value of $k$ since $\lambda_k$ is
  a multiple of $l_{i+1}$ and $ml\geq n_{i+1}'$. By choosing a $k$ that is
  not among the at most $i+1$ forbidden values, we have that the first
  $i+1$ coordinates of $\underline{g}_{i+1}^{ml}\underline{f}^{\lambda_k}$
  are strongly contracting in their coordinate space.
\end{proof}

We will say an element $g \in G_i$ has a $K$--long $h_i$--projection if
there exists an $f\in G_i$ such that
$d^\pi_{f\H_i}(\bp_i,g.\bp_i) \geq K$.

\begin{lemma}\label{lem:independentcontractingelements}
  Given $\underline{h}$ as in \fullref{lemma:contracting}, there exists an
  element $\underline{h'}=(h'_1,\dots,h'_n)\in N$ such that $h'_i$ is
  strongly contracting for each $G_i \act X_i$ and there exists a $K$ such
  that powers of $h'_i$ have no $K$--long $h_i$--projections and powers of
  $h_i$ have no $K$--long $h'_i$--projections.
\end{lemma}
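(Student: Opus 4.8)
We have $\underline{h} = (h_1,\dots,h_n) \in N$ with each $h_i$ strongly contracting, and we want another element $\underline{h'} = (h_1',\dots,h_n') \in N$, again strongly contracting in each coordinate, such that $h_i$ and $h_i'$ are "independent" in the sense that powers of one have no $K$-long projection to translates of the quasi-axis of the other (for some uniform $K$).

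**Why do we want this?** In the sketch proof of corollary:fxf, the key was that if $(a_1', a_2') \in N$-minimal section, then $a_i'$ avoids having $h_i$ as a subword (in each coordinate). More generally, we want to forbid powers of a strongly contracting element $\underline{h}$ appearing in minimal sections. But to run a growth-sensitivity argument à la free-product subsets $Z^* * h^m$, we probably need to "use up" the element $\underline{h}$ — i.e., we want to conjugate/modify so that the contracting direction in coordinate $i$ is genuinely different from the contracting direction in other coordinates, or to have two independent contracting elements to play off each other. The phrase "powers of $h_i'$ have no $K$-long $h_i$-projections and vice versa" literally says: $\langle h_i' \rangle$ and $\langle h_i \rangle$ are "transverse" — their quasi-axes don't fellow-travel for long, and more strongly, no power of one projects far onto *any* $G_i$-translate of the quasi-axis of the other.

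**Key observation.** A strongly contracting element $h_i$ has a maximal virtually cyclic subgroup $E(h_i)$. If $h_i$ is strongly contracting and $g \notin E(h_i)$ — more precisely, if no nontrivial power of $g$ lies in $E(h_i)$ — then by lemma:powers, $d_{\H_i}^\pi(g^n.p, p)$ is bounded by $2d(p, g.p) + D$, uniformly in $n$. And by lemma:projection (P2), for a fixed pair $(X, Y)$, only finitely many translates $V$ have $d_V^\pi(X,Y) > \xi$. The intuition: as long as $h_i'$ is chosen so that $\langle h_i' \rangle$ is not commensurable to $\langle h_i \rangle$, i.e., $E(h_i') \neq E(h_i)$, then powers of $h_i'$ have uniformly bounded projections to $\H_i = E(h_i).\bp_i$. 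We then need to upgrade "bounded projection to $\H_i$" to "bounded projection to *every* $G_i$-translate $f\H_i$."

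**Proof plan.**

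\begin{proof}
The plan is to produce $\underline{h'}$ as a power of $\underline{h}$ multiplied by a suitable commutator, arranging that in each coordinate $h_i'$ generates a virtually cyclic subgroup with $E(h_i') \neq E(h_i)$, and then to extract the uniform constant $K$ from the projection axioms.

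\textbf{Step 1: Making the axes transverse in each coordinate.} Since $N$ is normal and $\chi_i(N)$ is infinite, $\chi_i(N)$ is not virtually cyclic (as $G_i$ is non-elementary and acts with a strongly contracting element, so $\chi_i(N)$ contains two independent strongly contracting elements by \cite[Proposition~3.1]{ArzCasTao13} and standard ping-pong). In particular there is an element $\underline{g} \in N$ whose $i$-th coordinate $g_i$ satisfies $g_i \notin E(h_i)$; more carefully, applying \fullref{lemma:contracting} to a suitable conjugate, or just using that $\chi_i(N)$ contains a strongly contracting element $k_i$ with $E(k_i) \neq E(h_i)$, we may assume no nontrivial power of $g_i$ lies in $E(h_i)$ for each $i$. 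Now apply \fullref{lemma:contracting} to the normal subgroup $N$ again, but starting the induction from the elements $\underline{g}_i$ chosen so that their strongly contracting coordinates avoid $E(h_i)$: the output is an element $\underline{h'} = (h_1',\dots,h_n') \in N$, strongly contracting in each coordinate, and (by controlling the construction, which only multiplies together conjugates and powers of the chosen $\underline{g}_i$) with no nontrivial power of $h_i'$ in $E(h_i)$, and symmetrically no nontrivial power of $h_i$ in $E(h_i')$. The symmetry in the last clause can be arranged because $E(h_i') \ne E(h_i)$ is a symmetric condition on the pair: if some power of $h_i$ lay in $E(h_i')$ then $\langle h_i \rangle$ and $\langle h_i' \rangle$ would be commensurable, forcing $E(h_i) = E(h_i')$.

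\textbf{Step 2: Uniformly bounded projections to $\H_i$ and $\H_i'$.} Fix $i$. Since no nontrivial power of $h_i'$ lies in $E(h_i)$, \fullref{lem:powers} (applied with $g = h_i'$, $p = \bp_i \in \H_i$) gives a constant $D_i$ with $d_{\H_i}^\pi((h_i')^m.\bp_i, \bp_i) \le 2 d_i(\bp_i, h_i'.\bp_i) + D_i$ for all $m > 0$; the same holds for $m < 0$ by replacing $h_i'$ with its inverse. Symmetrically there is a constant $D_i'$ bounding $d_{\H_i'}^\pi(h_i^m.\bp_i, \bp_i)$.

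\textbf{Step 3: Upgrading to arbitrary translates.} We must bound $d_{f\H_i}^\pi(\bp_i, (h_i')^m.\bp_i)$ over all $f \in G_i$, not just $f = 1$. Fix $m$ and write $x = \bp_i$, $y = (h_i')^m.\bp_i$. By axiom (P2) of \fullref{lem:projection}, the set $\mathcal{B}_m = \{ V \in \Y_i \mid d_V^\pi(x, y) > \xi \}$ is finite; we must bound $d_V^\pi(x,y)$ for $V \in \mathcal{B}_m$ independently of $m$. Here is where strong contraction of $h_i'$ enters: the orbit segment $j \mapsto (h_i')^j.\bp_i$ for $j \in [0,m]$ is a quasi-geodesic from $x$ to $y$ that fellow-travels every geodesic $[x,y]$ (\fullref{lem:almostgeodesic}), and any $V$ with $d_V^\pi(x,y) > \xi$ must have its defining subset pass within a bounded distance of this quasi-geodesic (by the strongly contracting property of $\pi_V$ together with (P1)). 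Thus $V$ is of the form $f\H_i$ with $f.\bp_i$ within a bounded distance of $(h_i')^j.\bp_i$ for some $j$, i.e., $f = (h_i')^j e$ for $e$ in a fixed finite set (by proper discontinuity and cocompactness). By $\langle h_i' \rangle$-equivariance of the projections, $d_{f\H_i}^\pi(x,y) = d_{(h_i')^j e \H_i}^\pi(x,y)$; translating by $(h_i')^{-j}$ reduces this to a projection of $(h_i')^{-j}.x$ and $(h_i')^{m-j}.x$ onto $e\H_i$, and these two points are powers of $h_i'$ applied to $\bp_i$, so Step 2 (after conjugating $e\H_i$ back, using that only finitely many $e$ occur) bounds it. Taking $K$ larger than $\xi$ and than the finitely many resulting bounds, we get that powers of $h_i'$ have no $K$-long $h_i$-projection. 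The argument for powers of $h_i$ and $h_i'$-projections is identical with the roles reversed, and taking $K$ to be the maximum over $i$ of all constants produced completes the proof.
\end{proof}

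\medskip

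\textbf{Main obstacle.} The genuinely delicate point is Step 3 — upgrading a bound on projections to the single axis $\H_i$ to a bound on projections to *all* $G_i$-translates. The naive worry is that even though $h_i'$ doesn't project far onto $\H_i$ itself, some conjugate $f\H_i$ with $f$ "following" the orbit of $h_i'$ might receive a long projection. The resolution uses the Morse/strongly-contracting property: any axis onto which $(h_i')^m.\bp_i$ projects far must lie near the quasi-geodesic traced by the orbit, so it is a bounded translate of $\H_i$ along that orbit, and then equivariance plus Step 2 controls it. Getting the constant genuinely uniform in $m$ (not just finite for each $m$) is the crux, and it is exactly the content of the projection axioms (P1)–(P2) combined with \fullref{lem:almostgeodesic}; I would expect the authors to invoke a lemma of this type — essentially "far projections onto translates of a quasi-axis occur only near the orbit" — from \cite{Sis11} or \cite{ArzCasTao13} rather than reprove it.
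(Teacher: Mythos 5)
Your proposal takes a genuinely different route from the paper's, and it has a gap that would sink it. The paper's proof is a one-liner: take \emph{any} $\underline g=(g_1,\dots,g_n)\in G$ (not $N$!) with $g_i\notin E(h_i)$, and set $\underline{h'}=\underline g\,\underline h^m\underline{\inv g}\,\underline{\inv h}^m$, which lies in $N$ by normality. By \cite[Proposition~3.1]{ArzCasTao13}, each coordinate $h_i'=g_ih_i^m\inv{g_i}\inv{h_i}^m$ is strongly contracting for $m$ large, and because its quasi-axis is \emph{explicitly} a zigzag concatenation of pieces of $\H_i$, $g_i\H_i$, $g_ih_i^m\inv{g_i}\H_i$, \dots, the projection of any power of $h_i'$ onto any translate $f\H_i$ is controlled by the longest single piece that $f\H_i$ can fellow-travel, which is read off directly as $d^\pi_{g_i\H_i}(\H_i, g_ih_i^m\inv{g_i}\H_i)$ plus an error. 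You instead try to manufacture $\underline{h'}$ by re-running the induction of \fullref{lemma:contracting} and then deduce the uniform bound $K$ abstractly from ``transversality'' of $E(h_i')$ and $E(h_i)$. The explicit commutator structure, which is what makes the bound on $K$ essentially free, is the idea you are missing.

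The concrete gap is in the interaction between your Step~1 and Step~3. In Step~1 you arrange (at best) that no nontrivial power of $h_i'$ lies in $E(h_i)$, i.e.\ $E(h_i')\neq E(h_i)$. But Step~3 reduces a projection onto $f\H_i$ (for $f=(h_i')^je$ with $e$ in a finite set) to a projection of $h_i'$-orbit points onto $e\H_i$, and to run \fullref{lem:powers} there you need that no nontrivial power of $h_i'$ lies in $eE(h_i)\inv e$ --- equivalently, that $E(h_i')$ is not $G_i$-\emph{conjugate} to $E(h_i)$. This is strictly stronger: if, say, $h_i'=gh_ig^{-1}$ with $g\notin E(h_i)$, then $E(h_i')=gE(h_i)\inv g\neq E(h_i)$ and no power of $h_i'$ lies in $E(h_i)$, yet $\H_i'$ fellow-travels $g\H_i$ and the $h_i$-projections of powers of $h_i'$ onto $g\H_i$ are unbounded, so the conclusion of the lemma fails. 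Your Step~1 never rules this out, and re-running \fullref{lemma:contracting} gives no control over which maximal virtually cyclic subgroups its output lands in. Separately, even granting the correct transversality hypothesis, your Step~3 claim that ``far projections occur only near the orbit'' is asserted, not proved; the paper sidesteps all of this because for the commutator element the geometry of the axis is explicit and the bound on $K$ is immediate from the BBF projection axioms applied to the finitely many translates appearing in the concatenation.
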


\begin{proof}
  For each $i$, the group $G_i$ is non-elementary, so there exists a
  $g_i\in G_i\setminus E(h_i)$. Let $\underline{g}=(g_1,\dots,g_n)$. 
\cite[Proposition~3.1]{ArzCasTao13} shows that
  $\underline{h}'=\underline{g}\underline{h}^m\underline{\inv{g}}\underline{\inv{h}}^m\in
  N$ is strongly contracting in each coordinate for any sufficiently large
  $m$, so $K$ can be taken to be $\max_i
  d^\pi_{g_i\H_i}(\H_i,g_ih_i^m\inv{g}_i\H_i)+2\xi_i$, where $\xi_i$ is
  chosen by \fullref{lem:projection}. 
\end{proof}

\section{Elements without Long, Positive Projections}\label{sec:nolongprojection}
%!TEX root = Main.tex

In the following, let $G$ be any finitely generated, non-elementary group
(not necessarily a product) acting properly discontinuously and cocompactly
by isometries on a based proper geodesic metric space $(\X,d,\bp)$. Suppose
there exists a strongly contracting element $h\in G$ for $G\act\X$. Let
$\H=E(h).\bp$ and let $C$ be the contraction constant for
$\pi_\H$.

Let $D=\diam(G\backslash\X)$ and let $D'=\diam \big(
\left<h\right>\backslash \H \big)$. 

\begin{definition}\label{def:longprojection}
  For $x_0$ and $x_1$ in $\X$, the ordered pair $(x_0,x_1)$ has a
  $K$--\emph{long, positive $h$--projection} if there exists a  $k\in G$ such
  that $d^\pi_{k\H}(x_0,x_1)\geq K$ and $d(k.\bp,\pi_{k\H}(x_0))\leq
  D'$ and there exists
  $\alpha>0$ such that $d(kh^{\alpha}\!.\bp,\pi_{k\H}(x_1))\leq
  D'$.
\end{definition}

It is immediate that the property of having a $K$--long, positive
$h$--projection is invariant under the $G$--action. We also remark that the
`positive' restriction is vacuous if $K>2D'$ and there exists an element of $G$ that
flips the ends of $\H$.

\begin{definition}\label{def:Ghat}
  Let $\hat{G}(K)$ be the elements $g\in G$ such that there exist points
  $x\in B_D(\bp)$ and $y\in B_D(g.\bp)$ and a geodesic $\gamma$ from $x$ to
  $y$ such that no subsegment of $\gamma$ has a $K$--long, positive
  $h$--projection.
\end{definition}

For any $g \in G$, set $|g| = d(o,g.o)$.
\begin{lemma}\label{lem:shorter}
  For all sufficiently large $K$ and for every $g\in
  G\setminus\hat{G}(K)$ there exists a $k\in G$ and an interval $
  [\alpha',\alpha'']\subset\mathbb{Z}^+$ such that
  $|kh^{-\alpha}\inv{k}g|<|g|$ for all $\alpha'\leq \alpha\leq
  \alpha''$.
The lower bound $\alpha'$ depends only on $h$ and the upper bound
$\alpha''$ depends linearly on $K$. 
\end{lemma}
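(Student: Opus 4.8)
The plan is to unwind the definition of $\hat{G}(K)$ and then perform a single geodesic shortcut. Since $g\in G\setminus\hat{G}(K)$, taking $x=\bp$ and $y=g.\bp$ in \fullref{def:Ghat} shows that every geodesic $\gamma$ from $\bp$ to $g.\bp$ has a subsegment, with endpoints $x_0$ then $x_1$ in the order they occur along $\gamma$, carrying a $K$--long, positive $h$--projection. By \fullref{def:longprojection} there is then a $k\in G$ with $d^\pi_{k\H}(x_0,x_1)\ge K$, with $\pi_{k\H}(x_0)$ within $D'$ of $k.\bp$, and with $\pi_{k\H}(x_1)$ within $D'$ of $kh^\alpha\!.\bp$ for some integer $\alpha>0$. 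This $k$, together with an interval of exponents ending near $\alpha$, will be the output, and the positivity hypothesis is exactly what guarantees $\alpha>0$ so that $kh^{-\alpha}\inv{k}g$ ``cancels'' the offending stretch.

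The heart of the argument is to extract fellow--travelling data, and this is the only place the contracting hypothesis really gets used. Once $K>C$, the second clause of the strongly contracting property forces $\gamma$ to come within $C$ of $\pi_{k\H}(x_0)$ and within $C$ of $\pi_{k\H}(x_1)$; since $x_0$ precedes $x_1$ on $\gamma$ and these projections are more than $C$ apart, $\gamma$ crosses the $C$--neighbourhood of $k\H$ entering near $\pi_{k\H}(x_0)$ and leaving near $\pi_{k\H}(x_1)$, so there are parameters $t_0<t_1$ with $d(\gamma(t_0),k.\bp)\le c_0$ and $d(\gamma(t_1),kh^\alpha\!.\bp)\le c_0$, where $c_0:=C+D'$. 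Moreover, applying \fullref{lem:almostgeodesic} to the conjugate $kh\inv{k}$, for which $k\H$ is a quasi-axis based at $k.\bp$, together with the Morse property of the quasi-axis, a standard argument produces a uniform constant $\nu$ such that every point $kh^\beta\!.\bp$ with $0\le\beta\le\alpha$ lies within $\nu$ of $\gamma([t_0,t_1])$; fix $s_\beta\in[t_0,t_1]$ with $d(kh^\beta\!.\bp,\gamma(s_\beta))\le\nu$.

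The core estimate is then just the triangle inequality. Write $L=|g|=d(\bp,g.\bp)$ and let $0\le\beta\le\alpha$. The isometry $kh^\beta\inv{k}$ sends $kh^{-\beta}\inv{k}g.\bp$ to $g.\bp$, and sends $\gamma(t_0)$, which is $c_0$--close to $k.\bp$, to a point that is $c_0$--close to $kh^\beta\!.\bp$ (because $kh^\beta\inv{k}$ carries $k.\bp$ to $kh^\beta\!.\bp$). Hence
\begin{align*}
  |kh^{-\beta}\inv{k}g|
  &\le d(\bp,\gamma(t_0)) + d\big(\gamma(t_0),kh^{-\beta}\inv{k}g.\bp\big)\\
  &= t_0 + d\big(kh^\beta\inv{k}.\gamma(t_0),\,g.\bp\big)
  \le t_0 + c_0 + \nu + (L-s_\beta).
\end{align*}
Since $s_\beta\ge t_0$ we have $s_\beta-t_0=d(\gamma(s_\beta),\gamma(t_0))\ge d(k.\bp,kh^\beta\!.\bp)-\nu-c_0=d(\bp,h^\beta\!.\bp)-\nu-c_0$, so
\[ |kh^{-\beta}\inv{k}g|\le L-d(\bp,h^\beta\!.\bp)+2c_0+2\nu. \]
Because $i\mapsto h^i\!.\bp$ is a $(\lambda,\epsilon)$--quasi-geodesic, $d(\bp,h^\beta\!.\bp)>2c_0+2\nu$ for all $\beta\ge\alpha'$, where $\alpha'$ is a positive integer depending only on $\lambda,\epsilon,C,D'$, hence only on $h$; for each such $\beta$ with $\beta\le\alpha$ the displayed bound gives $|kh^{-\beta}\inv{k}g|<L=|g|$.

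Finally, for the upper endpoint: from $d^\pi_{k\H}(x_0,x_1)\ge K$ and the two $D'$--bounds one gets $d(\bp,h^\alpha\!.\bp)=d(k.\bp,kh^\alpha\!.\bp)\ge K-K_0$ for a constant $K_0$ depending only on $h$ (absorbing $2D'$ and the diameters of the projection sets), and then the quasi-geodesic estimate yields $\alpha\ge(K-K_0)/\lambda-\epsilon$, which is at least $\alpha'$ once $K$ is large enough and is linear in $K$. We may therefore take $[\alpha',\alpha'']$ with $\alpha'':=\alpha$ (or, if an explicit upper bound is wanted, any truncation of $\alpha$ that is still at least a fixed positive multiple of $K$). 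The step I expect to be the genuine obstacle is the second one: pinning down, with uniform constants, both the ordering assertion (that $\gamma$ enters the neighbourhood of $k\H$ near $\pi_{k\H}(x_0)$ before it leaves near $\pi_{k\H}(x_1)$) and the claim that the intermediate axis points $kh^\beta\!.\bp$ track the sub-arc $\gamma([t_0,t_1])$ uniformly. Everything else is either a direct reading of \fullref{def:Ghat} and \fullref{def:longprojection} or triangle--inequality bookkeeping, whereas this step is exactly where the Morse/contracting technology of Sisto \cite{Sis11} and \cite{ArzCasTao13} must be invoked.
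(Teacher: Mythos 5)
Your proposal is correct and captures the right ideas, but it is organized differently from the paper's proof in one notable respect, which is worth pointing out since it is also the step you yourself flag as the obstacle. Both proofs extract $k$ and $\alpha$ (the paper's $\beta$) from the definition of a $K$--long, positive $h$--projection, and both use the contracting property to locate points of $\gamma$ within $C+D'$ of $k.\bp$ and of $kh^\alpha\!.\bp$. The difference is in how the intermediate axis points $kh^\beta\!.\bp$ enter. You pin each $kh^\beta\!.\bp$ to within a uniform $\nu$ of the sub-arc $\gamma([t_0,t_1])$; this requires arguing that a geodesic whose endpoints are merely $c_0$--close to (not on) the quasi-axis fellow-travels a geodesic between axis points, and then that this geodesic coarsely surjects onto the axis segment. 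That step is true and standard given the Morse property, but it is a layer of argument beyond what \fullref{lem:almostgeodesic} literally gives, and your sketch would need to carry it through with explicit constants. The paper sidesteps this entirely: it never claims $\gamma$ passes near the intermediate points $kh^\beta\!.\bp$. Instead it applies \fullref{lem:almostgeodesic} \emph{intrinsically} to the quasi-axis to get
\[
d(k.\bp, kh^\beta\!.\bp)\geq d(k.\bp,kh^\alpha\!.\bp)+d(kh^\alpha\!.\bp,kh^\beta\!.\bp)-2\mu,
\]
then lower-bounds $|g|$ by a chain of waypoints $\gamma(0)\to\gamma(t_0)\to k.\bp\to kh^\beta\!.\bp\to\gamma(t_1)\to\gamma(T)$ and upper-bounds $|kh^{-\alpha}\inv{k}g|$ by the same chain with the middle segment $d(k.\bp,kh^\beta\!.\bp)$ replaced by $d(kh^\alpha\!.\bp,kh^\beta\!.\bp)$, and subtracts. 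This only needs $\gamma$ to visit the two endpoints of the axis segment, not the interior, and so avoids the fellow-travelling lemma. Your version reaches the same estimate $|kh^{-\alpha}\inv{k}g|\leq|g|-d(\bp,h^\alpha\!.\bp)+O(1)$, and your treatment of $\alpha'$ and $\alpha''$ matches the paper (your $\alpha''=\alpha\geq(K-K_0)/\lambda-\epsilon$ is the same lower-linear-in-$K$ bound). Both proofs share the implicit assumption that $\gamma$ meets the $c_0$--neighbourhoods of $k.\bp$ and $kh^\alpha\!.\bp$ in that order; you at least flag it, while the paper leaves it tacit. In short: your route works, but by reorganizing the triangle inequality as the paper does, you could drop the fellow-travelling claim entirely and make the proof self-contained from \fullref{lem:almostgeodesic}.
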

\begin{proof}
    Let $\gamma$ be a geodesic from $\gamma(0)=\bp$ to $\gamma(T)=g.\bp$.
  Since $g\notin \hat{G}(K)$, there are times $t_0$ and $t_1$ in $[0,T]$
  such that $(\gamma(t_0),\gamma(t_1))$ has a $K$--long, positive
  $h$--projection. Let $k\in G$ such that
  $d^\pi_{k\H}(\gamma(t_0),\gamma(t_1))\geq K$ and
  $d(\pi_{k\H}(\gamma(t_0)), k.\bp)$, and let $\beta>0$ be such that
  $d(kh^\beta,\pi_{k\H}(\gamma(t_1)))\leq D'$.

Let $\lambda$, $\epsilon$, and $\mu$ be the constants of
\fullref{lem:almostgeodesic} for $h$.
Let $\xi$ be the constant of \fullref{lem:projection}
Since $i\mapsto h^i\!.\bp$ is $(\lambda,\epsilon)$--quasi-geodesic and
$d(1,h^\beta\!.\bp)\geq K-2D'-2\xi$ we have $\beta\geq \frac{1}{\lambda}(K-2D'-2\xi)-\epsilon$.

Set $\alpha''=\beta$ and
$\alpha'=\lambda(4(C+D'+\xi)+\epsilon+2\mu+1))$.
We assume that $K$ is large enough so that $\alpha''\geq\alpha'$.
 For all
$\alpha'\leq\alpha\leq\alpha''$ we have:
\[d(k.\bp,kh^{\beta}\!.\bp)\geq d(k.\bp,kh^{\alpha}\!.\bp)+d(kh^{\alpha}\!.\bp,kh^{\beta}\!.\bp)-2\mu\]
Rearranging, and using the quasi-geodesic condition for $k\H$:
\begin{align*}
  d(kh^{\alpha}\!.\bp,kh^{\beta}\!.\bp)&\leq
  d(k.\bp,kh^{\beta}\!.\bp)-d(k.\bp,kh^{\alpha}\!.\bp)+2\mu\\
&\leq d(k.\bp,kh^{\beta}\!.\bp)-(\sfrac{\alpha}{\lambda}-\epsilon)+2\mu\\
&<d(k.\bp,kh^{\beta}\!.\bp) -4(C+D'+\xi)
\end{align*}
Now we use the fact that $\gamma$ passes $C+D'+\xi$ close to $k.\bp$ and $kh^\beta\!.\bp$:
  \begin{align*}
    |g|
    &= d
    \big(\gamma(0),\gamma(t_0))
     + d \big( \gamma(t_0),\gamma(t_1) \big) + d\big( \gamma(t_1),\gamma(T) \big)\\
    &\geq d \big( \gamma(0),\gamma(t_0) \big) 
     +  d \big( \gamma(t_0),k.\bp \big)+d(k.\bp,kh^{\beta}\!.\bp) \\
 &\qquad\qquad+ d \big( kh^{\beta}\!.\bp,\gamma(t_1)
    \big) + d \big( \gamma(t_1),\gamma(T) \big)- 4(C+D'+\xi)\\
  \end{align*}
So:
  \begin{align*}
    |kh^{-\alpha}\inv{k}g|&\leq
    d(\gamma(0),\gamma(t_0))+d(\gamma(t_0),k.\bp)+d(k.\bp,kh^{-\alpha}\inv{k}kh^{\beta}\!.\bp)\\
    &\qquad\qquad+d(kh^{-\alpha}\inv{k}kh^{\beta}\!.\bp,kh^{-\alpha}\inv{k}\gamma(t_1))+d(kh^{-\alpha}\inv{k}\gamma(t_1),kh^{-\alpha}\inv{k}\gamma(T))\\
    &=d(\gamma(0),\gamma(t_0))+d(\gamma(t_0),k.\bp) +d(kh^{\alpha}\!.\bp,kh^{\beta}\!.\bp)\\
    &\qquad\qquad+d(kh^{\beta}\!.\bp,\gamma(t_1))+d(\gamma(t_1),\gamma(T))\\ 
    &\leq |g| +4(C+D'+\xi)-d(k.\bp,kh^{\beta}\!.\bp) +d(kh^{\alpha}\!.\bp,kh^{\beta}\!.\bp)\\
    &<|g|\qedhere
  \end{align*}
\end{proof}

\begin{lemma}\label{lem:subadditivity}
  Fix $K$ and let $P(r) = \#(B_r(\bp)\cap\hat{G}(K).\bp)$. The function
  $\log P(r)$ is subadditive in $r$, up to bounded error.
\end{lemma}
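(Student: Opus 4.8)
The plan is to establish that $P(r+s) \leq P(r)\cdot P(s)\cdot(\text{bounded factor})$, which after taking logarithms gives subadditivity of $\log P(r)$ up to bounded error. The key geometric idea is that an element of $\hat{G}(K)$ that is the product of two elements realizing distances $r$ and $s$ can be built from, or at least dominated in count by, pairs of elements in smaller balls that individually lie in $\hat{G}(K)$ (or in a slightly enlarged version of it). First I would fix $g \in B_{r+s}(\bp) \cap \hat{G}(K).\bp$, so there is a geodesic $\gamma$ from a point $x \in B_D(\bp)$ to a point $y \in B_D(g.\bp)$ no subsegment of which has a $K$--long, positive $h$--projection. Pick a point $z = \gamma(t)$ on $\gamma$ roughly at parameter $t \approx d(x,z) \approx \frac{r}{r+s}\ell(\gamma)$, so that $z$ divides $\gamma$ into an initial segment of length about $r$ and a terminal segment of length about $s$. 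Using cocompactness of $G \act \X$, choose $g_1 \in G$ with $d(g_1.\bp, z) \leq D$; then $z$ is within $D$ of $g_1.\bp$, and both subsegments $\gamma|_{[0,t]}$ and $\gamma|_{[t,T]}$ are geodesics inheriting the no--long--positive--projection property from $\gamma$. Hence $g_1 \in \hat{G}(K)$ (with the roles of $\bp, g_1.\bp$ played by $x, z$), and similarly $\inv{g_1}g \in \hat{G}(K)$ (translating by $\inv{g_1}$, using $G$--invariance of the long--positive--projection property noted after \fullref{def:longprojection}).

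Next I would control the counting. The map $g \mapsto (g_1, \inv{g_1}g)$ is not well defined as stated because $g_1$ depends on a choice, so instead I would bound the number of $g$ landing in $B_{r+s}(\bp)$ that yield a given pair $(g_1, g_2)$ with $g_1 \in B_{r+C'}(\bp)\cap\hat{G}(K).\bp$ and $g_2 \in B_{s+C'}(\bp)\cap\hat{G}(K).\bp$, where $C'$ is a constant absorbing the various $D$'s and a bit of slack in how evenly $z$ splits $\gamma$. Since the action is properly discontinuous, for each pair $(g_1,g_2)$ there are at most boundedly many $g \in G$ with $d(g.\bp, g_1g_2.\bp)$ bounded, and the reconstruction $g \approx g_1 g_2$ holds up to bounded error: indeed $d(g_1g_2.\bp, g.\bp) \le d(g_2.\bp, \inv{g_1}g.\bp) + (\text{bounded}) $ and $\inv{g_1}g$ was chosen close to the relevant orbit point. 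This yields
\[
  P(r+s) \;\leq\; \Lambda \cdot P(r + C') \cdot P(s + C')
\]
for a constant $\Lambda$ depending only on $G \act \X$ and $D$. To remove the $C'$ shifts, I would note that $P$ is non-decreasing and that $P(r+C') \leq e^{cC'} P(r)$ is not automatic, so instead one records subadditivity in the weaker form $\log P(r+s) \le \log P(r) + \log P(s) + (\text{const})$ after first observing that $P(r+C')\le \#(B_{C'}(\bp)\cap G.\bp)\cdot$(something) — more cleanly, reparametrize: set $Q(r) = \log P(r + C')$; then $Q(r+s) \le \log\Lambda + \log P(r+C') + \log P(s+C') \le \log \Lambda + Q(r) + Q(s)$... wait, that needs $r+s+2C' \ge $ the argument, which holds, so $Q$ is honestly subadditive, and $\log P$ differs from $Q$ by the bounded-error reparametrization since $P$ is monotonic and $P(r+C') \le P(r) \cdot \#(B_{2D+C'}(\bp)\cap G.\bp)$ by covering $B_{r+C'}$ by translates — hence $\log P(r+C') - \log P(r)$ is bounded.

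The main obstacle I anticipate is the bookkeeping in the reconstruction step: showing that the correspondence $g \leftrightarrow (g_1, g_2)$ is finite-to-one in both directions with uniform constants, and simultaneously that both $g_1$ and $g_2$ genuinely lie in $\hat{G}(K)$ (not merely in some version with a larger constant $K' > K$). The former is a standard consequence of proper discontinuity plus cocompactness, but care is needed because $g_1$ is only determined up to the stabilizer-scale ambiguity and up to the choice of splitting point $z$ on $\gamma$; choosing $z$ to be, say, the last point of $\gamma$ at parameter $\le r$ makes it canonical given $\gamma$, but $\gamma$ itself is a choice, so one must phrase the count as: the number of $g_1.\bp$ arising is at most $P(r+C')$ and, given $g_1$, the number of possible $g$ is at most $P(s+C')$ times a proper-discontinuity constant. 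The latter point — that the subsegments inherit the no--$K$--long--positive--projection property verbatim — is immediate, since any subsegment of a subsegment of $\gamma$ is a subsegment of $\gamma$; the only subtlety is matching the endpoint conditions $x \in B_D(\bp)$, $z \in B_D(g_1.\bp)$ to the definition of $\hat{G}(K)$, which is exactly why cocompactness (giving $g_1$ within $D$ of $z$) is invoked, as flagged in the remark following \fullref{main}.
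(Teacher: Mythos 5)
Your argument is essentially the paper's proof: split the geodesic $\gamma$ witnessing $g \in \hat{G}(K)$ at an intermediate point, use cocompactness to find an orbit point $f.\bp$ within $D$ of that point, observe that $f$ and $\inv{f}g$ lie in $\hat{G}(K)$ because subsegments of $\gamma$ inherit the no--$K$--long--positive--projection property, and absorb the $O(D)$ slack into a bounded additive error. Two spots where the paper is cleaner than your hedging: the reconstruction $g = f\cdot(\inv{f}g)$ is exact (not ``up to bounded error''), so the assignment $g.\bp \mapsto (f.\bp, \inv{f}g.\bp)$ is injective up to at most the finite stabilizer of $\bp$, giving $P(r)\leq P(m+2D)\cdot P(n+2D)$ directly; and the shift by $2D$ is removed not by a separate covering argument (which, as stated, would not obviously produce $\hat{G}(K)$--points in the smaller ball) but simply by reapplying the same inequality with $m$ replaced by $m-2D$ and $n$ by $4D$, yielding $P(r)\leq P(6D)^2\,P(m)\,P(n)$.
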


\begin{proof}
  Let $g.\bp\in B_r(\bp)\cap\hat{G}(K).\bp$. Let $x$, $y$, and $\gamma$ be
  as in \fullref{def:Ghat}. Let $m+n=r$. 
If $d(x,y)>m$ let $z$ be the
  point on $\gamma$ at distance $m$ from $x$. 
Otherwise, let $z=y$. There
  exists an $f\in G$ such that $d(z,f.\bp)\leq D$.

  We claim that $f$ contributes to $P(m+2D)$ and $\inv{f}g$ contributes to
  $P(n+2D)$. 
This is because $d(\bp,f.\bp)\leq m+2D$, and the subsegment
  of $\gamma$ from $x$ to $z$ is a geodesic for $f$ satisfying
  \fullref{def:Ghat}. 
Similarly, $d(\bp,\inv{f}g.\bp)=d(f.\bp,g.\bp)\leq
  n+2D$, and the subsegment of $\inv{f}.\gamma$ from $\inv{f}.z$ to $\inv{f}.y$ is a geodesic for
  $\inv{f}g$ satisfying \fullref{def:Ghat}.

  This shows that for any $m+n=r$ we have $P(r)\leq P(m+2D)\cdot P(n+2D)$.
  Applying this relation for $(m-2D)+4D=m+2D$ and $(n-2D)+4D=n+2D$ yields:
  \[P(r)\leq (P(6D))^2\cdot P(m)\cdot P(n)\]
  Thus:
  \[\log P(m+n)\leq \log P(m)+\log P(n) +2\log P(6D). \qedhere\] 
\end{proof}

There is a result known as Fekete's Lemma that says if $(a_i)$ is a
subadditive sequence then $\lim_{i\to\infty}\frac{a_i}{i}$ exists and
is equal to $\inf_i\frac{a_i}{i}$.
We will need the following generalization for almost subadditive sequences:
\begin{lemma}\label{lem:Fekete}
  Let $(a_i)$ be an unbounded, increasing sequence of positive numbers. Suppose
  there exists $b$ such that $a_{m+n}\leq a_m+a_n+b$ for all $m$ and
  $n$.
Then $L=\lim_{i\to\infty}\frac{a_i}{i}$ exists and $a_i\geq Li-b$ for
all $i$.
\end{lemma}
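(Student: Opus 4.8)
The plan is to prove \autoref{lem:Fekete}, the almost-subadditive version of Fekete's Lemma.

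\medskip

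\textbf{Approach.} The standard trick for Fekete's Lemma is to fix a denominator $m$, use the Euclidean algorithm to write an arbitrary $i$ as $qm+s$ with $0\le s<m$, iterate the subadditivity inequality to bound $a_i$ in terms of $a_m$, $a_s$, and the error term $b$, and then let $i\to\infty$ to conclude $\limsup_i a_i/i \le a_m/m + (\text{error})/m$. Since $m$ was arbitrary, this gives $\limsup_i a_i/i \le \inf_m (a_m/m) + 0$ after also letting $m\to\infty$ in the error contribution; combined with a matching liminf bound, the limit exists. The presence of the additive constant $b$ requires only cosmetic modifications: one replaces $a_n$ by $a_n+b$ throughout, or equivalently observes that the sequence $a_i' = a_i + b$ is genuinely subadditive, since $a_{m+n}'=a_{m+n}+b\le a_m+a_n+2b = a_m'+a_n'$.

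\medskip

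\textbf{Key steps.} First I would set $a_i' = a_i+b$ and check $a'_{m+n}\le a'_m+a'_n$, reducing to the classical statement. Then, following Fekete, fix $m\ge 1$; for each $i$ write $i=qm+s$ with $0\le s<m$ and $q=\lfloor i/m\rfloor$, and iterate to get $a'_i \le q\,a'_m + a'_s$. Dividing by $i$ and using $q/i \to 1/m$ and $a'_s/i\to 0$ as $i\to\infty$ (here $s$ ranges over the finite set $\{0,\dots,m-1\}$, so the $a'_s$ are bounded), obtain $\limsup_{i\to\infty} a'_i/i \le a'_m/m$. Taking the infimum over $m$ gives $\limsup_i a'_i/i \le \inf_m a'_m/m$. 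On the other hand, trivially $\liminf_i a'_i/i \ge \inf_m a'_m/m$, since each term $a'_i/i$ is at least the infimum. Hence $L' := \lim_i a'_i/i$ exists and equals $\inf_m a'_m/m$; note $L'$ is a non-negative real number because the $a_i$ are positive and increasing and unbounded (the sequence is not eventually constant, so $L'>0$ is not actually needed, only $L'\ge 0$ and finiteness). Since $a_i/i = a'_i/i - b/i$, the limit $L=\lim_i a_i/i$ also exists and equals $L'$. Finally, from $L = \inf_m a'_m/m \le a'_i/i = (a_i+b)/i$ for every $i$, rearranging gives $a_i \ge Li - b$, which is the desired inequality.

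\medskip

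\textbf{Main obstacle.} There is no serious obstacle here; this is a well-known lemma and the proof is routine. The only points requiring a little care are: (i) justifying that the limit is finite and non-negative — this uses that $(a_i)$ is a sequence of positive numbers, so $a'_m/m>0$ for all $m$, hence $\inf_m a'_m/m \ge 0$, and finiteness is automatic since $a'_m/m$ is itself an upper bound for the infimum; and (ii) making sure the error term is handled cleanly, which the substitution $a_i'=a_i+b$ accomplishes without any case analysis. The hypotheses that the sequence is increasing and unbounded are not needed for the existence of the limit or the inequality $a_i\ge Li-b$, but they are presumably included because that is the form in which the lemma is applied (to $\log P(r)$ via \autoref{lem:subadditivity}); I would simply not rely on them beyond positivity.
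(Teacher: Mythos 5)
Your proof is correct, and it takes a genuinely different route from the paper's. You reduce to the classical Fekete lemma via the substitution $a_i' = a_i + b$, which makes $(a_i')$ honestly subadditive, and then sandwich $\limsup a_i'/i \le \inf_m a_m'/m \le \liminf a_i'/i$ to conclude the limit exists and equals $\inf_m a_m'/m$; the inequality $a_i \ge Li - b$ then falls out immediately from $L \le a_i'/i$. The paper instead argues by contradiction directly on $(a_i)$: assuming $L^+ > L^-$, it exploits the hypotheses that $(a_i)$ is increasing and unbounded to make the ratio $\tfrac{a_i+b}{a_i}$ approach $1$ and derives a contradiction from the Euclidean-division estimate, then proves $a_i \ge Li - b$ by a second, separate contradiction along the subsequence $a_{ij}$. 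Your version is shorter, avoids the $\sqrt{\cdot}$-bookkeeping, delivers both claims in one stroke via the identification $L = \inf_m a_m'/m$, and — as you correctly note — does not actually need the increasing/unbounded hypotheses (positivity alone guarantees the limit is finite and nonnegative). The paper's route buys nothing additional here; your reduction is the cleaner argument.
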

\begin{proof}
  Let $L^+=\limsup_i\frac{a_i}{i}$. Let $L^-=\liminf_i\frac{a_i}{i}$.
Suppose that $L^+>L^-$\!. Let $\epsilon=\frac{L^+-L^-}{3}$.
Since the sequence is increasing and unbounded, there exists an $I$
such that for all $i>I$ we have
$\frac{a_i+b}{a_i}<\sqrt{\frac{L^+-\epsilon}{L^-+\epsilon}}$.
Fix an $i>I$ such that $\frac{a_i}{i}<L^-+\epsilon$.
Choose a $j$ such that $\frac{a_j}{j}>L^+-\epsilon$ and
$\frac{q+1}{q}<\sqrt{\frac{L^+-\epsilon}{L^-+\epsilon}}$, where
$qi\leq j<(q+1)i$.
\[L^+-\epsilon<\frac{a_j}{j}\leq\frac{a_j}{qi}<\frac{(q+1)(a_i+b)}{qi}<\frac{L^+-\epsilon}{L^-+\epsilon}\cdot\frac{a_i}{i}\leq \frac{L^+-\epsilon}{L^-+\epsilon}(L^-+\epsilon)=L^+-\epsilon\]
This is a contradiction, so $L=L^+=L^-$\!.

If for some $i$ we have $a_i<Li-b$ then
\[L=\lim_{j\to\infty}\frac{a_{ij}}{ij}\leq\lim_{j\to\infty}\frac{j(a_i+b)}{ij}=\frac{a_i+b}{i}<L,\]
which is a contradiction.
\end{proof}

\subsection{Divergence}
For any subset $A \subset G$, define: 
\[ \Theta_A(s) = \sum_{r=0}^\infty
\# \big( B_r(o) \cap A.\bp \big) e^{-rs}\] 
The growth exponent $\delta_A$
is the \emph{critical exponent} of $\Theta_A$, that is, $\Theta_A$ diverges for all $s <
\delta_A$ and converges for all $s > \delta_A$. 
We say $A$ is
\emph{divergent} if $\Theta_A$ diverges at $\delta_A$.

\begin{lemma}\label{lemma:divergence}
  $\hat{G}(K)$ is divergent.
\end{lemma}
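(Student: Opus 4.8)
The plan is to show that $\hat{G}(K)$ is divergent by comparing it to the full group $G$, which is divergent because the action is cocompact (so $\delta_G$ is realized, and in fact $\Theta_G$ diverges at $\delta_G$ by a standard argument: cocompact actions on geodesic spaces give groups whose Poincaré series diverges at the critical exponent, cf.\ \cite{ArzCasTao13}). The key point is \fullref{lem:shorter}: every element of $G\setminus\hat{G}(K)$ can be shortened by inserting a negative power $kh^{-\alpha}\inv{k}$, and this shortening procedure is controlled. So I would argue that $G$ decomposes, up to controlled multiplicity, as products of elements of $\hat{G}(K)$, and transfer divergence of $\Theta_G$ to divergence of $\Theta_{\hat{G}(K)}$.

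Here are the steps. First I would set up the Poincaré series machinery: recall from \fullref{lem:subadditivity} that $\log P(r)$ is almost subadditive, where $P(r)=\#(B_r(\bp)\cap\hat G(K).\bp)$, and apply \fullref{lem:Fekete} to conclude that $L=\lim_r \frac{\log P(r)}{r}$ exists, equals $\delta_{\hat G(K)}$, and satisfies $\log P(r)\geq Lr - b$ for all $r$, where $b=2\log P(6D)$. This already tells us a great deal: the growth of $\hat G(K)$ is "at least exponential of rate $L$ with no sub-exponential correction," which is exactly the kind of lower bound one needs for divergence. Indeed, $\Theta_{\hat G(K)}(L) = \sum_r P(r) e^{-Lr} \geq \sum_r e^{-b} = \infty$, provided $P(r)>0$ for infinitely many $r$ — which holds since $\hat G(K)$ contains arbitrarily long geodesics (for instance powers of a suitably chosen element with no long positive $h$--projection, or more simply because $\hat G(K)$ must be infinite since $G\setminus\hat G(K)$ elements are all strictly shortenable and this cannot account for a positive-density set of balls). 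So the heart of the argument is really just: (i) $\delta_{\hat G(K)}$ is attained as an honest limit via Fekete, and (ii) the almost-subadditivity constant $b$ is independent of $r$, so the series at $s=\delta_{\hat G(K)}$ dominates a divergent geometric-free sum.

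The main obstacle I anticipate is verifying that $\delta_{\hat G(K)}=L$ rather than just $\delta_{\hat G(K)}\geq L$, i.e.\ that $P(r)$ does not grow faster than $e^{Lr}$ — but this is exactly the content of Fekete's lemma applied to the almost-subadditive sequence $a_r = \log P(r)$ (one must check $(a_r)$ is unbounded and increasing, which follows since balls are nested and $\hat G(K)$ is infinite). A secondary subtlety is confirming $\hat G(K)$ is infinite so that $(a_r)$ is genuinely unbounded: if $\hat G(K)$ were finite then every sufficiently long geodesic in $\X$ from $\bp$ would have a subsegment with a $K$--long positive $h$--projection, which is false because, e.g., geodesics fellow-traveling $h^{-1}$--orbits have only negative projections; alternatively one invokes that $G$ itself is infinite and \fullref{lem:subadditivity}'s argument shows $P(r)$ bounds $\#(B_r(\bp)\cap G.\bp)$ below up to the factor $P(6D)^2$, forcing $P(r)\to\infty$. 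Once these bookkeeping points are settled, the divergence $\Theta_{\hat G(K)}(\delta_{\hat G(K)})=\sum_r P(r)e^{-Lr}\geq \sum_r e^{-b}=\infty$ is immediate.
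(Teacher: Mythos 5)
Your actual argument --- apply \fullref{lem:subadditivity} and \fullref{lem:Fekete} to obtain the uniform lower bound $\log P(r) \geq r\,\delta_{\hat{G}(K)} - 2\log P(6D)$, then conclude $\Theta_{\hat{G}(K)}(\delta_{\hat{G}(K)}) \geq \sum_{r} e^{-2\log P(6D)} = \infty$ --- is exactly the paper's proof, and your worry about $\hat{G}(K)$ being infinite (needed for the hypotheses of \fullref{lem:Fekete}) is a reasonable point the paper glosses over, though it is harmless since $1\in\hat{G}(K)$ and the finite case makes divergence trivial with $\delta_{\hat{G}(K)}=0$. Your opening paragraph sketches a different plan (transferring divergence from $\Theta_G$ via \fullref{lem:shorter}), but you never carry it out; the proof you actually write down is the Fekete route, which matches the paper.
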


\begin{proof}
Let $P(r)=\#(B_r(\bp)\cap\hat{G}(K).\bp)$.
  By \fullref{lem:subadditivity} and \fullref{lem:Fekete}, $\log P(r)
  \geq r\delta_{\hat{G}(K)}-2\log P(6D)$ for all $r$. Thus:
  \[\Theta_{\hat{G}(K)}(\delta_{\hat{G}(K)})=\sum_{r=0}^\infty
  P(r)\exp(-r\delta_{\hat{G}(K)})\geq
  \sum_{r=0}^\infty \frac{1}{P(6D)^2}=\infty. \qedhere\] 
\end{proof}

\begin{lemma}\label{mainlemma}
  For sufficiently large $K$, the growth exponent of $\hat{G}(K)$ is
  strictly smaller than the growth exponent of $G$. 
\end{lemma}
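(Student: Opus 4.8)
The plan is to bound a Poincar\'e-type series for $G$ from below by a geometric series whose common ratio is essentially the corresponding series for $\hat{G}(K)$, and then to convert this into the \emph{strict} inequality $\delta_G>\delta_{\hat{G}(K)}$ using the divergence of $\hat{G}(K)$ established in \fullref{lemma:divergence}. Write $\delta=\delta_{\hat{G}(K)}$. Since $G$ is non-elementary and carries a strongly contracting element, $G\act\X$ is growth tight by \cite{ArzCasTao13}, so $\delta_G>0$; hence we may assume $\delta>0$, as otherwise there is nothing to prove. Because $\Theta_{\hat{G}(K)}(\delta)=\infty$ by \fullref{lemma:divergence} while each summand $\#\big(B_r(\bp)\cap\hat{G}(K).\bp\big)e^{-rs}$ increases monotonically to its value at $s=\delta$ as $s\downarrow\delta$, monotone convergence gives $\lim_{s\downarrow\delta}\Theta_{\hat{G}(K)}(s)=\infty$; the same holds for any divergent subset of $\hat{G}(K)$ with growth exponent $\delta$.

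Fix $K$ large. The quasi-line $\H=E(h).\bp$ is stabilized setwise precisely by $E(h)$, which is virtually cyclic, so, since the action is proper, the set of $g\in\hat{G}(K)$ whose orbit point lies within a fixed bounded distance of $\H$ has at most linear growth; discarding these (using $\delta>0$) produces a subset $Z\subset\hat{G}(K)$ that is still divergent with $\delta_Z=\delta$ and satisfies $z\H\neq\H$, i.e.\ $z\notin E(h)$, for all $z\in Z$. Now fix $m$ so large that $d^\pi_\H(\bp,h^m.\bp)$ dominates $K$ and the constant $\xi$ of \fullref{lem:projection}, and consider the free product subset $Z^*\!*h^m$. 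Applying the Bestvina--Bromberg--Fujiwara quasi-tree construction \cite{BesBroFuj10} to the translates of $\H$ and feeding in the projection axioms (P0)--(P2) of \fullref{lem:projection}, one runs the no-backtracking argument of \cite[Section~2.4]{ArzCasTao13} (with the orientation refinement discussed below) to conclude that the orbit map is injective on $Z^*\!*h^m$ and that the syllable count $j$ and the factors $z_1,\dots,z_j$ are recoverable from the orbit point of $z_1h^m\cdots z_jh^m$. Together with proper discontinuity of the action, this makes the map $(z_1,\dots,z_j)\mapsto z_1h^m\cdots z_jh^m.\bp$ at most $M$--to--one for a constant $M$.

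Granting the embedding, the conclusion is immediate. By the triangle inequality $|z_1h^m\cdots z_jh^m|\le\sum_{i=1}^{j}|z_i|+j|h^m|$, so for every $s>\delta$, up to a positive factor depending only on $s$,
\[
\Theta_G(s)\ \gtrsim\ \sum_{j\ge1}\ \sum_{z_1,\dots,z_j\in Z\setminus\{1\}}e^{-|z_1h^m\cdots z_jh^m|\,s}\ \ge\ \sum_{j\ge1}\Big(e^{-|h^m|\,s}\!\!\sum_{z\in Z\setminus\{1\}}\!\!e^{-|z|\,s}\Big)^{\!j}.
\]
As $s\downarrow\delta$ the inner sum over $z$ tends to $\infty$ (it differs from $\Theta_Z(s)$ only by a bounded factor and a bounded term, and $Z$ is divergent with $\delta_Z=\delta$), so there is $s_0>\delta$ with $e^{-|h^m|s_0}\sum_{z\in Z\setminus\{1\}}e^{-|z|s_0}>1$; for this $s_0$ the geometric series diverges, forcing $\Theta_G(s_0)=\infty$, and hence $\delta_G\ge s_0>\delta=\delta_{\hat{G}(K)}$.

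The real work is the embedding of $Z^*\!*h^m$. In \cite{ArzCasTao13} one forbids \emph{all} long projections to translates of $\H$, whereas an element of $Z$ may have long \emph{negative} $h$--projections, and one must verify these cannot cancel the genuinely positive, very long projections supplied by the $h^m$ blocks. The key input is the defining property of $\hat{G}(K)$: a geodesic representing an element of $Z$ has no subsegment with a $K$--long \emph{positive} $h$--projection, so in $z_1h^m\cdots z_jh^m$ each $h^m$ is crossed in the positive direction along its own translate of $\H$; once this orientation is carried through the projection-axiom estimates---so that the argument becomes, in effect, Baumslag's Lemma for oriented projections in the quasi-tree---the reasoning of \cite[Section~2.4]{ArzCasTao13} carries over for $m$ large relative to $K$ and $\xi$. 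Making this orientation bookkeeping precise is the crux, and it is precisely where this lemma strengthens the growth-sensitivity result of \cite{ArzCasTao13}.
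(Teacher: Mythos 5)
Your concluding series estimate is a correct unwrapping of the Dal'bo--Peign\'e--Picaud criterion the paper cites, and the overall architecture (build $Z\subset\hat{G}(K)$ that is still divergent, embed the free product set $Z^*\!*h^m$, deduce strict growth gap) matches the paper. But the construction of $Z$ has a genuine gap, and it is exactly the step where the paper does its real work.

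You take $Z$ to be $\hat G(K)$ with the elements whose orbit points lie near $\H$ removed, and you note this forces $z\notin E(h)$. That is true but far weaker than what the free-product-embedding argument of \cite[Proposition~4.1]{ArzCasTao13} needs. An element $g\in\hat G(K)$ can have $g.\bp$ arbitrarily far from $\H$ and yet have a very large (necessarily \emph{negative}) $h$--projection $d^\pi_\H(\bp,g.\bp)$; the definition of $\hat G(K)$ only forbids long \emph{positive} projections along subsegments, so nothing you have done removes large negative endpoint projections to $\H$ or to $g\H$. Those are precisely the projections that allow a syllable $z_{i+1}$ to backtrack along the preceding $h^m$--block, and they are why you cannot, as you concede, ``run the no-backtracking argument'' as is. The missing idea is the paper's map $\phi$: using the second contracting element $h'$ from \fullref{lem:independentcontractingelements}, one replaces $g$ by $h'g$, $g\inv{h}'$, or $h'g\inv{h}'$ as needed to force $d^\pi_\H(\bp,\phi(g).\bp)<K$ and $d^\pi_{\phi(g)\H}(\bp,\phi(g).\bp)<K$ while changing $|g|$ by a bounded amount, and then passes to a maximal $2K$--separated subset. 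The small endpoint projections together with $2K$--separation also give that distinct elements of $Z$ determine distinct translates $z\H$, another hypothesis your $Z$ does not supply. Without $\phi$, the ``orientation bookkeeping'' you defer to as the crux is not a refinement to be carried through; it simply fails for your $Z$.

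Separately, your linear-growth estimate for orbit points near $\H$ is fine (properness plus $E(h)$ virtually cyclic), and discarding them does no harm; it is just not the relevant pruning. Once $Z$ is built as in the paper, your geometric-series argument would close the proof, so the gap is isolated to the construction of $Z$.
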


\begin{proof}
  Let $h'\in G$ and $D$ be the element and constant, respectively, of
  \fullref{lem:independentcontractingelements} (in this case the product
  has only one factor). Let $K>D$.

  Define a map $\phi$ on $\hat{G}(K)$ as follows.
  \[\phi(g)=\begin{cases}
  h'g\inv{h}' &\text{ if }d^\pi_{\H}(\bp,g.\bp)\geq K\text{ and 
  }d^\pi_{g\H}(\bp, g.\bp)\geq K\\
  h'g &\text{ if }d^\pi_{\H}(\bp,g.\bp)\geq K\\
  g\inv{h}' &\text{ if }d^\pi_{g\H}(\bp, g.\bp)\geq K\\
  g &\text{ otherwise}
  \end{cases}\]

  Then for all $g\in\hat{G}(K)$ we have $d^\pi_{\H} \big( \bp,\phi(g).\bp
  \big)< K$ and $d^\pi_{\phi(g)\H} \big( \bp, \phi(g).\bp \big)< K$.

  Let $\hat{G}'(K)$ be the image of $\phi$. Then $\phi$ is a bijection between
  $\hat{G}(K)$ and $\hat{G}'(K)$, and for all $g\in\hat{G}(K)$ we have
  $|g|=|\phi(g)|\pm 2|h'|$. It follows that
  $\delta_{\hat{G}(K)}=\delta_{\hat{G}'(K)}$ and $\hat{G}'(K)$ is divergent.

  Let $Z$ be a maximal $2K$--separated subset of $\hat{G}'(K)$. Then
  $\delta_Z=\delta_{\hat{G}'(K)}$ and $Z$ is divergent. For $z$ and $z'$ in
  $Z$, if $z\H=z'\H$ then since $d^\pi_{z\H}(\bp,z.\bp)<K$ and
  $d^\pi_{z'\H}(\bp,z'\!.\bp)<K$ we have $d(z.\bp,z'\!.\bp)<2K$, so $z=z'$.

  Consider the free product set 
  \[ Z^*\!*h^m = \bigcup_{i=1}^\infty \big\{ z_1h^m\cdots z_ih^m \mid z_j\in
  Z\setminus \{1\} \big\}. \] 
  By the same arguments as \cite[Proposition~4.1]{ArzCasTao13}, for all
  sufficiently large $m$, the orbit map is an injection of $Z^*\!*h^m$ into
  $\X$. 
This fact, together with divergence of $Z$, implies that
  $\delta_Z<\delta_G$, by \cite[Criterion~2.4]{DalPeiPic11}.
\end{proof}

\section{Proof of the Main Theorem}\label{sec:proof}
%!TEX root = Main.tex

Let $(\X_1,d_1,\bp_1), \ldots, (\X_n,d_n,\bp_n)$ a finite collection of proper geodesic
metric spaces. 
Let $\X = \X_1 \times \cdots \times \X_n$, and let $\bp=(\bp_1,\ldots,\bp_n)$.
Let $\underline{x}=(x_1,\ldots,x_n)$ and $\underline{y}=(y_1,\ldots,y_n)$ be any points in
$\X$. 
For any $1\leq p <\infty$, the $L^p$ metric on $\X$ is defined to be: 
\[ d^p(\underline{x},\underline{y}) = \left( \sum_{i=1}^n (d_i(x_i,y_i))^p \right)^{1/p}\] 
The  $L^\infty$ metric on $\X$ is:
\[ d^\infty(\underline{x},\underline{y}) = \max_{i} d_i(x_i,y_i)\]

\begin{proposition}\label{prop:duality}
For $i=1,\dots, n$, let $G_i$ be a non-elementary, finitely generated group acting properly
discontinuously and cocompactly by isometries on a proper geodesic
metric space $\X_i$. 
Let $G=G_1\times\cdots\times G_n$.
For each $i$, let $A_i$ be a subset of $G_i$ such that $\log P_i(r)$
is subadditive in $r$, up to bounded error, for $P_i(r)=
\#(B_r(\bp_i)\cap A_i.\bp_i)$. 
 Let $\delta_i=\delta_{A_i.\bp_i}$ be the
  growth exponent of $A_i$.
For $1\leq p\leq\infty$, the growth exponent $\delta_A$ of $A=\prod_i^n A_i$ with respect
to the $L^p$ metric on $\X$ is the
$L^q$--norm of $(\delta_1,\dots, \delta_n)$, where
$\sfrac{1}{p}+\sfrac{1}{q}=1$, and $\frac{1}{\infty}$ is understood to
be $0$.
\end{proposition}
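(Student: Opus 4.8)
The statement asserts that the growth exponent of a product set with respect to the $L^p$ metric is the $L^q$-norm of the vector of growth exponents of the factors, where $q$ is the conjugate exponent. I will prove this by establishing matching upper and lower bounds on $\Theta_A$, or equivalently by counting lattice points in the $L^p$-ball. The key structural input is the subadditivity hypothesis on each $\log P_i$: by \fullref{lem:Fekete}, for each $i$ there is a constant $b_i$ with $\log P_i(r) \geq r\delta_i - b_i$ for all $r$, and of course $\log P_i(r) \leq r\delta_i + o(r)$ by definition of the growth exponent (indeed, subadditivity plus Fekete gives $\log P_i(r) \leq r\delta_i + b_i$ as well, since $\delta_i = \inf_r (\log P_i(r) + b_i)/r$ up to the error term — I will spell out the clean two-sided estimate $|\log P_i(r) - r\delta_i| \leq b_i$). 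Thus each factor count is, up to a multiplicative constant, exactly $e^{r\delta_i}$.

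\textbf{The counting reduction.} A point $\underline{a}.\bp \in A.\bp$ lies in the $L^p$-ball of radius $R$ about $\bp$ iff $\big(\sum_i |a_i|_i^p\big)^{1/p} \leq R$, where $|a_i|_i = d_i(\bp_i, a_i.\bp_i)$. Discretizing, the number of such points is comparable (up to factors polynomial in $R$, which do not affect the exponential rate) to
\[
\sum_{\substack{r_1,\dots,r_n \geq 0 \\ r_1^p + \cdots + r_n^p \leq R^p}} \prod_{i=1}^n P_i(r_i),
\]
and by the two-sided estimate above this is comparable to
\[
\sum_{\substack{r_1,\dots,r_n \geq 0 \\ r_1^p + \cdots + r_n^p \leq R^p}} \exp\Big(\sum_{i=1}^n r_i \delta_i\Big).
\]
The exponential growth rate of this sum in $R$ is governed by its largest term, so $\delta_A$ equals the maximum of $\sum_i r_i \delta_i$ subject to $\sum_i r_i^p \leq 1$ (after rescaling $r_i \mapsto r_i/R$). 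This is exactly the optimization problem computing the dual norm: by Hölder's inequality, $\max\{\sum_i r_i\delta_i : \|(r_i)\|_p \leq 1\} = \|(\delta_i)\|_q$ with $1/p + 1/q = 1$, with the usual conventions for $p=1$ ($q=\infty$) and $p=\infty$ ($q=1$). The lower bound follows by exhibiting the optimal $(r_i)$ — namely $r_i$ proportional to $\delta_i^{q-1}$ in the finite case, or concentrating all the budget in the coordinate with largest $\delta_i$ when $p=\infty$ — and checking that the corresponding single term already has the right exponential rate; the upper bound follows since the number of terms is polynomial in $R$.

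\textbf{Main obstacle.} The genuine content is not the Hölder duality (which is classical) but the passage from ``$\#(B_R \cap A.\bp)$'' to the clean sum over integer vectors $(r_1,\dots,r_n)$. Two points need care here: first, $|a_i|_i$ is a real number, not an integer, and the ball constraint couples the coordinates through a $p$-th power sum, so I must argue that rounding each $|a_i|_i$ up to the nearest integer changes $R^p$ by at most $O(R^{p-1})$, hence $R$ by $O(1)$, which is absorbed into the error term. Second — and this is the subtle point — the proposition only assumes subadditivity of $\log P_i$, not that $A_i.\bp_i$ is all of an orbit or that the metric spaces have any particular structure; so I must be careful that $P_i(r)$ genuinely counts orbit points in the ball of radius $r$ and that the product decomposition $A.\bp = \prod_i (A_i.\bp_i)$ is compatible with the product metric, which it is by definition of the coordinate-wise action and the $L^p$ metric. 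Once these bookkeeping issues are handled, the estimate $\log\#(B_R(\bp) \cap A.\bp) = R\,\|(\delta_1,\dots,\delta_n)\|_q + o(R)$ is immediate, and taking $\limsup_{R\to\infty} \frac1R(\cdots)$ gives the claim.
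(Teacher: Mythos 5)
Your approach matches the paper's: reduce the count $\#(B_R^p(\bp)\cap A.\bp)$ to a weighted sum over integer lattice points, control each $P_i$ via the subadditivity hypothesis and \fullref{lem:Fekete}, and identify the resulting optimization with $L^p$--$L^q$ H\"older duality. The overall structure and conclusion are correct, and you even correctly flag and resolve the rounding issue that the paper leaves implicit.

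One intermediate claim is false as stated, though. You assert a two-sided constant-error bound $|\log P_i(r)-r\delta_i|\le b_i$ from ``subadditivity plus Fekete.'' \fullref{lem:Fekete} gives only the one-sided inequality $\log P_i(r)\ge r\delta_i-b_i$; the upper bound is merely asymptotic, $\log P_i(r)\le r\delta_i+o(r)$. For instance $\log P_i(r)=\delta_i r+\log r$ is subadditive with growth exponent $\delta_i$, yet $\log P_i(r)-\delta_i r\to\infty$, so no uniform $b_i$ exists. Consequently ``each factor count is, up to a multiplicative constant, exactly $e^{r\delta_i}$'' overstates the situation: the correction factor is subexponential, not bounded. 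This does not damage the proof, because the correct statement $\log P_i(r)=\delta_i r(1+o(1))$ (which is precisely the paper's $P_i(r)\sim e^{\delta_i r}$, in its convention of comparing logarithms) suffices: the $o(r_i)$ errors aggregate to an $o(R)$ error in the exponent of the lattice sum and disappear upon taking $\limsup_{R\to\infty}\tfrac1R\log(\,\cdot\,)$. You should weaken the claim accordingly rather than promising to ``spell out'' an estimate that isn't true. A minor slip at the end: concentrating the whole budget in the coordinate of largest $\delta_i$ is the optimizer for $p=1$ (constraint $\sum r_i\le R$), not $p=\infty$; for $p=\infty$ one takes $r_i=R$ in every coordinate.
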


\begin{proof}
For each $g \in G_i$ let $|g|_i =
d_i(\bp_i,g.\bp_i)$. 
For $\underline{g}=(g_1,\ldots,g_n) \in G$, let
$|\underline{g}|_p = d^p(\bp,\underline{g}.\bp)$.
Let $B_r^p$ be the closed $r$--ball with respect to the $L^p$ metric.

  Let $P(r)=\# B_r^p(\bp)\cap A.\bp$.

Let $\mathbb{R}^n$ be equipped with
  the $L^p$ norm $||\cdot||_p$, and let  $S_r^p$ be the vectors of norm $r$. 
Let $\phi \from \mathbb{R}^n \to
  \mathbb{R}$ be the linear function $\phi(x_1,\ldots,x_n) = \sum_{i=1}^n
  \delta_i x_i$. For every $r>0$ the duality of $L^q$ and $L^p$ implies: 
  \[ ||(\delta_1,\dots,\delta_n)||_q = ||\phi||_p =
    \sup_{(x_1,\ldots, x_n) \in S^p_r}\frac{|\phi(x_1,\ldots, x_n)|}{r}\]

Since $\delta_i \geq
  0$ for all $i$, the supremum can be restricted to the positive sector of
  $S_r^p$. 
Furthermore, letting 
\[ Z_r^p = \left\{ (r_1,\ldots, r_n) \mid ||(r_1,\dots,r_n)||_p\leq r,\, r_i \in \mathbb{N} \right\}, \] 
we have: 
  \[ ||\phi||_p = \lim_{r \to \infty} \max_{(r_1,\ldots,r_n) \in Z_r^p}
  \frac{\phi(r_1,\ldots,r_n)}{r}\]   
  
Given two positive valued functions $f(r)$ and $g(r)$, we write 
  $f(r) \sim g(r)$ if $\lim_{r\to \infty} \frac{\log f(r)}{\log g(r)} = 1$.

\fullref{lem:subadditivity} and \fullref{lem:Fekete} imply $P_i(r)
\sim e^{\delta_i r}$ for each
  $i=1,\ldots,n$, so: 
  \begin{align*}
  ||\phi||_p&=\lim_{r \to \infty} \max_{(r_1,\ldots,r_n) \in Z_r^p}
  \frac{\phi(r_1,\ldots,r_n)}{r}\\
&=\lim_{r\to\infty}\max_{(r_1,\ldots,r_n) \in Z_r^p}  \frac{\log \prod_{i=1}^n P_i(r_i)}{r}
  \end{align*}

  For any fixed $r$ there is $(z_{r,1},\ldots, z_{r,n}) \in Z_r^p$  such that:
  \[ \prod_{i=1}^n P_i(z_{r,i}) = \max_{(r_1,\ldots,r_n) \in Z_r^p}  \prod_{i=1}^n P_i(r_i) \]

We also note that: 
\[ \prod_{i=1}^n
  P_i(z_{r,i}) \leq P(r) \le \sum_{(r_1,\ldots,r_n)\in Z_r^p} \prod_{i=1}^n
    P_i(r_i) \le
  \#Z_r^p \cdot\prod_{i=1}^n P_i(z_{r,i}) \] 

Since $\# Z_r^p\leq r^n$, this means $P(r) \sim \prod_i^n P_i(z_{r,i})$. 

Therefore:
  \begin{align*}
    \delta_A=\limsup_{r\to \infty} \frac{\log P(r)}{r} 
    & = \lim_{r \to \infty} \frac{\log \prod_{i=1}^n
      P_i(z_{r,i})}{r} \\
    &=\lim_{r\to\infty}\max_{(r_1,\ldots,r_n) \in Z_r^p}  \frac{\log \prod_{i=1}^n P_i(r_i)}{r}\\
    &=||\phi||_p= ||(\delta_1,\dots,\delta_n)||_q\qedhere
  \end{align*}
\end{proof}

\begin{proof}[Proof of \fullref{main}]
The existence of a strongly contracting element implies that each
factor group has strictly positive growth exponent, and
the main theorem of
\cite{ArzCasTao13} says that $G_i\act\X_i$ is growth tight, so we are done
if $n=1$.

Assume $n>1$ and let $1\leq q\leq\infty$ be such that $\sfrac{1}{p}+\sfrac{1}{q}=1$.
If $p=1$, then
by \fullref{prop:duality} the growth exponent of $G$
is the maximum of the growth exponents of the $G_i$.
Thus, we may kill the slowest growing factor without changing the
growth exponent, and the action of $G$ on $\X$ with the $L^1$ metric is
not growth tight.

Now assume $p>1$. 
Let $\chi_i\from G\to G_i$ be projection to the $i$--th coordinate. 
Let $N$
be an infinite normal subgroup of $G$. 

First we assume that $\chi_i(N)$ is infinite for all $i$.

By \fullref{lemma:contracting}, there exists an element
$\underline{h}=(h_1,\dots, h_n)\in N$ such that $h_i$ is a strongly
contracting element for $G_i\act X_i$ for each $i$.

Let $A$ be a \emph{minimal section} of the quotient map $G\to G/N$. That is,
$A$ consists of a representative for each coset $gN$ and
$d(\bp,\underline{a}.\bp) = d(N.\bp,\underline{a}N.\bp)$
for all $\underline{a} \in A$, where $d$ is the $L^p$ metric on $\X$. 

\begin{proposition} \label{prop:minimal}
  
  For all sufficiently large $K$ and
    for all $\underline{a}=(a_1,\ldots,a_n) \in A$ there exists an
    index $1\leq i\leq n$ such that $a_i \in \hat{G}_i(K)$.  

\end{proposition}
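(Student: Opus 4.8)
The plan is to argue by contradiction: suppose $\underline{a}=(a_1,\dots,a_n)\in A$ has $a_i\notin\hat{G}_i(K)$ for \emph{every} $i$. Then, by \fullref{lem:shorter} applied in each factor, for each $i$ there is an element $k_i\in G_i$ and an interval $[\alpha_i',\alpha_i'']\subset\mathbb{Z}^+$ such that $|k_ih_i^{-\alpha_i}\inv{k_i}a_i|_i<|a_i|_i$ for all $\alpha_i'\le\alpha_i\le\alpha_i''$, with $\alpha_i'$ depending only on $h_i$ and $\alpha_i''$ depending linearly on $K$. The point is that shortening a single coordinate strictly decreases the $L^p$ length of $\underline a$ when $p>1$ (indeed for any $p\ge 1$), so if we could perform the shortening by an element of $N$ we would contradict minimality of $\underline a\in A$.

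The obstacle — and I expect this to be the crux — is that $(\inv{k_1}h_1^{\alpha_1}k_1,\dots,\inv{k_n}h_n^{\alpha_n}k_n)$ is not in general an element of $N$; we only know $\underline h=(h_1,\dots,h_n)\in N$. So the strategy must be to realize a coordinate-wise shortening using only powers of the single element $\underline h$ (and conjugates thereof). Conjugating $\underline h$ by $\underline k=(k_1,\dots,k_n)$ gives $\inv{\underline k}\,\underline h\,\underline k=(\inv{k_1}h_1k_1,\dots,\inv{k_n}h_nk_n)\in N$, and its powers are $(\inv{k_1}h_1^{\alpha}k_1,\dots,\inv{k_n}h_n^{\alpha}k_n)$ for a \emph{common} exponent $\alpha$. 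The question is then whether a single exponent $\alpha$ can be chosen so that multiplying $\underline a$ on the left by $\underline k\,\underline h^{-\alpha}\,\inv{\underline k}$ shortens \emph{every} coordinate simultaneously. By \fullref{lem:shorter} the lower bounds $\alpha_i'$ depend only on $h_i$, hence are bounded independent of $K$; the upper bounds $\alpha_i''$ each grow linearly in $K$. So for $K$ sufficiently large the intersection $\bigcap_i[\alpha_i',\alpha_i'']$ is nonempty, and any $\alpha$ in this intersection does the job: $|k_ih_i^{-\alpha}\inv{k_i}a_i|_i<|a_i|_i$ for all $i$ simultaneously. (Here one should be slightly careful: if some $a_i$ already lies in $\hat G_i(K)$ we leave that coordinate alone by taking $k_i=1$; the hypothesis for contradiction is precisely that no coordinate is in $\hat G_i(K)$, so every coordinate genuinely shortens.)

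Putting it together: set $\underline k=(k_1,\dots,k_n)$ and pick $\alpha\in\bigcap_i[\alpha_i',\alpha_i'']$, which is nonempty once $K$ is large enough. Then $\underline g:=\underline k\,\underline h^{-\alpha}\,\inv{\underline k}\in N$ since $\underline h\in N$ and $N$ is normal. The element $\underline g\,\underline a=(k_1h_1^{-\alpha}\inv{k_1}a_1,\dots,k_nh_n^{-\alpha}\inv{k_n}a_n)$ represents the same coset $\underline a N$, but $d(\bp,\underline g\,\underline a.\bp)<d(\bp,\underline a.\bp)$ because $p>1$ forces the $L^p$ norm to strictly decrease when each coordinate strictly decreases. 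This contradicts the defining property of the minimal section $A$, namely $d(\bp,\underline a.\bp)=d(N.\bp,\underline a N.\bp)$. Hence some coordinate $a_i$ must lie in $\hat G_i(K)$, as claimed. The only quantitative bookkeeping needed is to make ``$K$ sufficiently large'' explicit: $K$ must be large enough that $\max_i\alpha_i'\le\min_i\alpha_i''$, and since each $\alpha_i'$ is a constant and each $\alpha_i''$ is linear in $K$ with positive slope, this holds for all $K$ above some threshold.
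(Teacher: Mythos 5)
Your proof is correct and follows essentially the same route as the paper: apply \fullref{lem:shorter} in each coordinate, use the fact that each $\alpha_i'$ depends only on $h_i$ while each $\alpha_i''$ grows linearly in $K$ to find a common exponent $\alpha$, and then multiply by $\underline{k}\,\underline{\inv{h}}^{\alpha}\,\inv{\underline{k}} \in N$ to contradict minimality. (One tiny slip in your motivating parenthetical: shortening a \emph{single} coordinate need not decrease the $L^\infty$ norm; but your actual argument shortens every coordinate, so the conclusion is unaffected.)
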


\begin{proof}

  For each $i$, let $\hat{G_i}(K)$ be as in \fullref{def:Ghat} for
  each $G_i$. 
Assume $K$ is greater than the constants $K$ from \fullref{mainlemma}
and \fullref{lem:shorter} applied to each $G_i$.

  Suppose $\underline{a}$ is such that for all $i$ we have $a_i\in G_i\setminus \hat{G_i}(K)$.
  For each $i$, let $k_i\in G_i$ and $[\alpha_i',\alpha_i'']$ be the
  $k$ and interval, respectively,
  from \fullref{lem:shorter} applied to $a_i$. 
The $\alpha_i'$ depend only on their respective $h_i$, while the
$\alpha_i''$ depend linearly on $K$.
By choosing $K$ large enough, we may choose $\alpha$ such that $\max_i
\alpha_i'\leq \alpha\leq\min_i\alpha_i''$, so that
$\alpha\in[\alpha_i',\alpha_i'']$ for all $i$.
Let $\underline{k}=(k_1,\dots,k_n)$.
The $i$--th coordinate of
$\underline{k}\underline{\inv{h}}^{\alpha}\inv{\underline{k}}\underline{a}$
is $k_i\inv{h_i}^{\alpha}\inv{k_i}a_i$, which is shorter than $a_i$ by
\fullref{lem:shorter}.
But this means that
$\underline{k}\underline{\inv{h}}^{\alpha}\inv{\underline{k}}\underline{a}$
is shorter than $\underline{a}$.
This contradicts the fact that $\underline{a}$ belongs to a minimal
section, since $\underline{k}\underline{\inv{h}}^{\alpha}\inv{\underline{k}}\underline{a}=\underline{a}(\underline{\inv{a}}\underline{k}\underline{\inv{h}}^{\alpha}\inv{\underline{k}}\underline{a})\in\underline{a}N$.
\end{proof}

Continuing the proof of \fullref{main}, by \fullref{prop:minimal}, \[A\subset
\bigcup_{i=1}^nG_1\times\cdots\times\hat{G}_i\times\cdots\times G_n,\]
where $\hat{G}_i=\hat{G}_i(K)$ for some sufficiently large $K$.
By \fullref{prop:duality}, the growth exponent of
$G_1\times\cdots\times\hat{G}_i\times\cdots\times G_n$ is
$||(\delta_1,\dots,\hat{\delta_i},\dots,\delta_n)||_q$, where
$\delta_i$ is the growth exponent of $G_i$ and $\hat{\delta_i}$ is the
growth exponent of $\hat{G}_i$.
Thus, the growth exponent of $A$ is $\max_i ||(\delta_1,\dots,\hat{\delta_i},\dots,\delta_n)||_q$.
By \fullref{mainlemma}, $\hat{\delta_i}< \delta_i$ for each $i$,
  so, since $q<\infty$: \[\delta_{G/N}=\delta_A=\max_i ||(\delta_1,\dots,\hat{\delta_i},\dots,\delta_n)||_q < ||(\delta_1,\dots,\delta_n)||_q  = \delta_G\]

  It remains to consider the case that some $\chi_i(N)$ is
  finite. 
By reordering, if necessary, we may assume $\chi_i(N)$ is finite for
$i\leq m$ and infinite for $i>m$.
Since $N$ is infinite, $m<n$.
Let $G^1=G_1\times\cdots\times G_m$ with
$\chi^1=\chi_1\times\cdots\times \chi_m\from G\to G^1$.
Let $G^\infty=G_{m+1}\times\cdots\times G_n$ with
$\chi^\infty=\chi_{m+1}\times\cdots\times\chi_n\from G\to G^\infty$.

Now $\ker(\chi^1) \cap N$ is a finite index subgroup of
  $N$ that is normal in $G$, so $G/N$ is a quotient of $G/(\ker(\chi^1) \cap
  N)$ by a finite group, and they have the same growth rates. 
Replacing
  $N$ with $\ker(\chi^1) \cap N$, we can assume that $\chi_i(N)$
  is trivial for $1\leq i\leq m$ and infinite for $m<i\leq n$. 
The theorem applied to $G^\infty$ shows that
$\delta_{G^\infty/\chi^\infty(N)}<\delta_{G^\infty}$, so, since $q<\infty$:
\[\delta_{G/N}=||(\delta_{G^1},\delta_{G^\infty/\chi^\infty(N)})||_q<||(\delta_{G^1},\delta_{G^\infty})||_q=\delta_G\qedhere\]
\end{proof}

In the case that the normal subgroup has infinite projection to each
factor, our proof uses the existence of a contracting element in each
factor in an essential way. 
One wonders if the theorem is still true without this hypothesis:
\begin{question*}
  If, for $1\leq i\leq n$, $G_i$ is a non-elementary, finitely generated group acting
  properly discontinuously and cocompactly by isometries on a proper
  geodesic metric space $\X_i$, and if, for all $i$, $G_i\act\X_i$ is
  growth tight, is it still true that the product group is growth
  tight with respect to the action on the
  product space with the $L^p$ metric for some/all $p>1$?
\end{question*}

%---------------------------------------
% End of main body of article
%---------------------------------------

%---------------------------------------
% Start of backmatter
%---------------------------------------

\bibliographystyle{hyperamsplain}
\bibliography{ProductGrowthTight}
%---------------------------------------
% End of backmatter
%---------------------------------------

\end{document}